\documentclass[reqno]{amsart}

\usepackage{tikz,amssymb}
\usepackage{hyperref}
\usepackage{mathtools} 

\newtheorem{theorem}{Theorem}[section]

\newtheorem{corollary}[theorem]{Corollary}
\newtheorem{lemma}[theorem]{Lemma}
\newtheorem{proposition}[theorem]{Proposition}

\newtheorem{problem}[theorem]{Problem}

\theoremstyle{definition}
\newtheorem{definition}[theorem]{Definition}
\newtheorem{remark}[theorem]{Remark}
\newtheorem{example}[theorem]{Example}

\numberwithin{equation}{section}
\numberwithin{figure}{section}
\numberwithin{table}{section}


\newcommand{\sig}{\sigma}
\newcommand{\alp}{\alpha}
\newcommand{\bet}{\beta}
\newcommand{\eps}{\varepsilon}
\newcommand{\ist}{\ensuremath{[\sigma,\tau]}}
\newcommand{\ost}{\ensuremath{(\sigma,\tau)}}
\newcommand{\covers}{\rightarrow}
\newcommand{\covby}{\leftarrow}

\newcommand{\len}[1]{|#1|}

\newcommand{\subpermtau}[2]{\tau_{[#1,#2]}}
\newcommand{\subpermbet}[2]{\bet_{[#1,#2]}}
\newcommand{\subperm}[3]{#1_{[#2,#3]}}
\newcommand{\subseqtau}[2]{\tau_{#1}\ldots\tau_{#2}}
\newcommand{\subseqbet}[2]{\bet_{#1}\ldots\bet_{#2}}
\newcommand{\bigP}{\ensuremath{\mathcal{P}}}
\newcommand{\lab}[3]{\lambda_{#1}(#2, #3)}
\newcommand\red{\rho}
\renewcommand\S{{\mathcal S}}
\newcommand\ol{\overline}
\newcommand\E{\mathbb{E}}
\renewcommand\P{\mathbb{P}}
\newcommand\xx{\overline{x}}
\newcommand{\discprob}[1]{\P_{#1}(\ist\mbox{ contains a disconnected subinterval})}
\newcommand\omitt[1]{}
\newcommand{\lep}{\le_{\bigP}}


\title{The structure of the consecutive pattern poset}

\author{Sergi Elizalde}%
\address{Department of Mathematics, Dartmouth College, Hanover, NH 03755, USA} \email{sergi.elizalde@dartmouth.edu}%

\author{Peter R. W. McNamara}
\address{Department of Mathematics, Bucknell University, Lewisburg, PA 17837, USA}
\email{peter.mcnamara@bucknell.edu}

\thanks{Sergi Elizalde was partially supported by grant \#280575 from the Simons Foundation, by grant H98230-14-1-0125 from the NSA, and by the Distinguished Visiting Professor Program at Bucknell University. Peter McNamara was partially supported by grant \#245597 from the Simons Foundation. 
}

\subjclass[2010]{Primary 06A07; Secondary 05A05, 05A16, 05E45, 52B22, 60C05}

\keywords{Consecutive pattern, poset, disconnected, shellable, rank unimodal,
Sperner, exterior, M\"obius function}

\begin{document}
\begin{abstract}
The consecutive pattern poset is the infinite partially ordered set of all permutations where $\sigma\le\tau$ if $\tau$ has a subsequence of adjacent entries in the same relative order as the entries of $\sigma$.  We study the structure of the intervals in this poset from topological, poset-theoretic, and enumerative perspectives. In particular, we prove that all intervals are rank-unimodal and strongly Sperner, and we characterize disconnected and shellable intervals. We also show that most intervals are not shellable and have M\"obius function equal to zero.
\end{abstract}

\maketitle

\tableofcontents

\section{Introduction}\label{sec:intro}

Consecutive patterns in permutations generalize well-studied notions such as descents, ascents, peaks, valleys and runs. 
A permutation $\sigma$ is said to be \emph{contained} in another one $\tau$ as a \emph{consecutive pattern} if $\tau$ has a subsequence of adjacent entries in the same relative order as the entries of $\sigma$.  Otherwise, $\tau$ is said to \emph{avoid} $\sigma$.  
For example, permutations that avoid both $123$ and $321$ as a consecutive pattern are the well-known alternating and reverse-alternating permutations.
A systematic study of enumerative aspects of consecutive patterns started in~\cite{EN03} and, in the last decade, such patterns have become a vibrant area of research; see~\cite{Eli16} for a recent survey.  Underlying all these questions is a partial order $\bigP$ on the set of all permutations, where we define $\sigma \leq \tau$ if $\sigma$ is contained in $\tau$ as a consecutive pattern.  This paper is the first systematic study of intervals in this consecutive pattern poset.  See Figure~\ref{fig:12-213546} for an example of such an interval.

Some questions for consecutive patterns are motivated by the analogous problems for so-called classical patterns, one of the most actively studied topics of combinatorics in the last three decades.  For the definition of containment in this classical case, we remove the restriction that the relevant subsequence of $\tau$ consists of adjacent entries.  
For example, 123 is less than 2314 in the classical pattern poset but not in the consecutive pattern poset.  See \cite{Bon12} for an exposition of some of the most important developments in the study of classical patterns, \cite{Kit11} for a detailed treatment, and \cite{Ste13} for a survey of recent developments.  

Examples of questions for consecutive patterns that are motivated by the classical case include classifying them into equivalence classes determined by the number of permutations avoiding them~\cite{Nak11}, finding generating functions for the distribution of occurrences of a fixed pattern in permutations~\cite{EN03,MR06,EN12}, and determining the asymptotic growth of the number of permutations avoiding a pattern~\cite{Eli13,Per13}.
Consecutive patterns are interesting not only because the answers to the above questions and the techniques used to solve them are usually quite different than for classical patterns, but also because they have important applications to dynamical systems~\cite{AEK08,Eli09}.

While most of the aforementioned work is enumerative, our approach also has a poset-theoretic and topological flavor.  An early inspiration for research from this viewpoint was a question of Wilf \cite{Wil02} asking for the M\"obius function of intervals in the poset defined by classical pattern containment.  This question remains wide open, but has received increasing attention of late \cite{BJJS11,McSt15,SaVa06,Smi14,Smi15,Smi16,StTe10}.  In contrast, the M\"obius function for intervals in $\bigP$, the consecutive pattern poset, has been determined by Bernini, Ferrari and Steingr\'imsson~\cite{BFS11} and by Sagan and Willenbring~\cite{SaWi12}.  This already gives an indication that the consecutive pattern case is more tractable for certain types of questions than the classical case.

The precursor in the classical case to the present work is \cite{McSt15}, where the focus is on classifying disconnected open intervals and showing that certain special intervals are shellable.    We successfully address these same topological questions of disconnectivity and shellability for the consecutive pattern poset $\bigP$.  Furthermore, we consider what poset-theoretic properties might hold for $\bigP$, e.g., we show that all intervals are rank-unimodal, a statement that is just a conjecture in the classical case \cite{McSt15}.  Recall that a finite graded poset of rank $N$ with $a_i$ elements of rank $i$ is called \emph{rank-unimodal} if the sequence $a_0, a_1, \ldots a_N$ is unimodal, meaning that there exists $k$ with $0 \leq k \leq N$ such that $a_0 \leq a_1 \leq \cdots \leq a_k \geq a_{k+1} \geq \cdots \geq a_N$.
Our main results include the following.
\begin{itemize}
\item A simple classification of those open intervals in $\bigP$ that are disconnected (Theorem~\ref{thm:disconnected}).
\item Almost all intervals contain a disconnected subinterval of rank at least 3, in a certain precise sense, and are thus not shellable (Theorem~\ref{thm:aadiscon} and Corollary~\ref{cor:nonshellable}).
\item All other intervals are shellable (Theorem~\ref{thm:shellable}).  One motivation for shellability is that it completely determines the homotopy type of the order complex of the interval as a wedge of spheres.  The number of spheres is the absolute value of the M\"obius function and is thus readily determined using \cite{BFS11,SaWi12}.
\item All intervals are rank-unimodal (Corollary~\ref{cor:rankunimodal}).  The highest rank of the maximum cardinality is easily determined and the interval takes on a particularly simple structure above this rank.
\item All intervals are strongly Sperner (Theorem~\ref{thm:sperner}), a condition equating the sizes of the union of the $k$ largest ranks and the largest union of $k$ antichains, for all $k$.  
\item It is clear from the formula for the M\"obius function $\mu(\sigma,\tau)$ that it depends heavily on the \emph{exterior} of $\tau$, which is the longest permutation that is both a proper prefix and proper suffix of $\tau$.  We initiate a study of the asymptotic behavior of the exterior in Section~\ref{sec:exterior}.  We show that its expected length is bounded between $e-1$ and $e$ (Theorem~\ref{thm:expectedxlength}).
\item Almost all intervals have a zero M\"obius function, in a certain precise sense (Corollary~\ref{cor:mu0}).
\end{itemize}

Taken together, these topological, poset-theoretic and enumerative results give a rather comprehensive picture of the structure of intervals in $\bigP$.

Preliminaries and some initial observations are the content of the next section.  Disconnectivity is the subject of Section 3, shellability is addressed in Section 4, and the properties of rank-unimodality and strongly Sperner are shown in Section 5.  The exterior of a permutation is considered in Section 6, along with several open problems about its behavior.  

\begin{figure}[htbp]
\begin{center}
\begin{tikzpicture}[scale=1.0]
\tikzstyle{every node}=[rectangle, rounded corners=3pt, inner sep=3pt];
\draw (2,0) node[draw] (12) {12};
\draw (0,1) node[draw] (123) {123};
\draw (2,1) node[draw] (213) {213};
\draw (4,1) node[draw] (132) {132};
\draw (0,2) node[draw] (2134) {2134};
\draw (2,2) node[draw] (1243) {1243};
\draw (4,2) node[draw] (1324) {1324};
\draw (1,3) node[draw] (21354) {21354};
\draw (3,3) node[draw] (12435) {12435};
\draw (2,4) node[draw] (213546) {213546};
\draw (12) -- (123) -- (2134) -- (21354) -- (213546) -- (12435) -- (1324) -- (132) -- (12);
\draw (12) -- (213) -- (2134);
\draw (213) -- (1324);
\draw (123) -- (1243) -- (21354);
\draw (132) -- (1243) -- (12435);
\end{tikzpicture}
\caption{The interval $[12,213546]$ in $\bigP$.}
\label{fig:12-213546}
\end{center}
\end{figure}

\section{Preliminaries}\label{sec:prelims}

In this section, we collect together some useful terminology and notation, and make some initial observations.

\subsection{Fundamentals}
Let $\S_n$ denote the set of permutations of $[n]\coloneqq\{1,2,\dots,n\}$. If $\tau\in\S_n$, we denote the length of $\tau$ by $\len{\tau}=n$, and we write $\tau$ in one-line notation as $\tau=\tau_1\tau_2\dots\tau_n$.  Given a sequence $a_1a_2\dots a_k$ of distinct positive integers, define its {\em reduction} $\red(a_1a_2\dots a_k)$ to be the permutation of $\{1,\dots,k\}$ obtained by
replacing the smallest entry with~$1$, the second smallest with~$2$, and so on. For example, $\red(394176) = 263154$.
For $1\le i\le j\le n$, let $\subpermtau{i}{j}=\red(\subseqtau{i}{j})$, and note that $\subpermtau{i}{j}\in\S_{j-i+1}$.  

We can now define our poset of interest.  We say that $\tau$ {\em contains} $\sigma$ {\em as a consecutive pattern} if there exist $1\le i\le j\le n$ such that $\subpermtau{i}{j}=\sigma$. We write $\sigma\le\tau$, and we say that $\subseqtau{i}{j}$ is an {\em occurrence} of $\sigma$. The relation $\le$ defines a partial order on the set $\S=\bigcup_{n\ge1}\S_n$ of all permutations, and we denote by $\bigP$ the corresponding partially ordered set, called the {\em consecutive pattern poset}. The poset $\bigP$ is the main object of study in this paper. See Figure~\ref{fig:12-213546} for an example of an interval in $\bigP$.

A related partial order on $\S$ is obtained by considering classical pattern containment instead, i.e., the case when $\tau$ is said to contain $\sigma$ if any subsequence of $\tau$ (not necessarily in consecutive positions) has reduction equal to $\sigma$. We will refer to the resulting poset as the {\em classical pattern poset}.

In the rest of this paper we will say that $\tau$ {\em contains} $\sigma$ to mean that $\tau$ contains $\sigma$ as a consecutive pattern, that is, $\sigma\le\tau$. If this is not the case, we say that $\tau$ {\em avoids} $\sigma$.  

We next recall some common operations on permutations.  The \emph{reversal} of the permutation $\tau = \tau_1 \tau_2 \ldots \tau_n$ is the permutation $\tau_n \ldots \tau_2 \tau_1$.
The \emph{complement} of $\tau$ is the permutation whose $i$th entry is $n+1-\tau_i$\,.
Note that both the reversal and complement operations preserve the order relation in $\bigP$.  We note, however, that unlike in the classical pattern poset, the inverse operation does not preserve the order relation in $\bigP$; e.g., the relation $132 \le 3142$ is not preserved when taking inverses.

Given $\sigma \in \S_m$ and $\tau \in \S_n$, we define their \emph{direct sum} $\sigma \oplus \tau$ as the concatenation of $\sigma$ and the permutation $\tau^{+m}$ formed by adding $m$ to every entry of $\tau$.  Similarly, the \emph{skew sum} $\sigma \ominus \tau$ is the concatenation of $\sigma^{+n}$ and $\tau$.  For example, $1324 \oplus 21 = 132465$ and $1324 \ominus 21 = 354621$.

To consider disconnectivity in Section~\ref{sec:disconnectivity} and shellability in Section~\ref{sec:shellability}, we need to define a simplicial complex associated with any interval of any poset.  Given an interval $\ist$, its \emph{order complex} $\Delta(\sigma,\tau)$ is the simplicial complex whose faces are the chains of the open interval $\ost$.  For example, the order complex of the interval $[12,213546]$ from Figure~\ref{fig:12-213546} is shown in Figure~\ref{fig:ordercomplex}.  
We will postpone the other necessary background about shellability until Section~\ref{sec:shellability}. We refer the reader to \cite{Wac07} for background on poset topology.

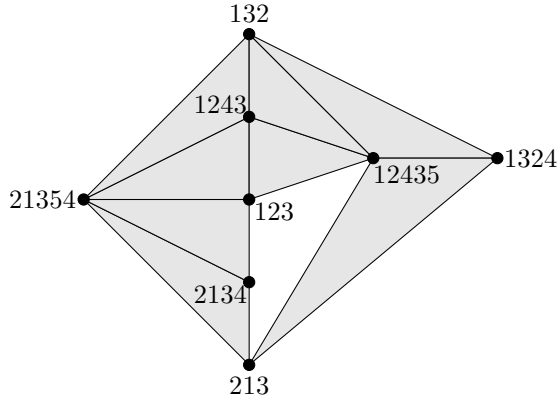
\begin{figure}
\begin{center}
\pgfdeclarelayer{bg}
\pgfsetlayers{bg,main}
\begin{tikzpicture}[scale=0.55]
\tikzstyle{every node}=[circle, inner sep=1.5pt, fill];
\draw (4,4) node[draw] (123) {};
\draw (4,0) node[draw] (213) {};
\draw (4,8) node[draw] (132) {};
\draw (4,2) node[draw] (2134) {};
\draw (4,6) node[draw] (1243) {};
\draw (10,5) node[draw] (1324) {};
\draw (0,4) node[draw] (21354) {};
\draw (7,5) node[draw] (12435) {};

\tikzstyle{every node}=[];
\draw(4,-0.5) node {213};
\draw(-1,4) node {21354};
\draw(3.3,1.7) node {2134};
\draw(4.6,3.7) node {123};
\draw(3.3,6.3) node {1243};
\draw(4,8.5) node {132};
\draw(7.8,4.6) node {12435};
\draw(10.8,5) node {1324};

\begin{pgfonlayer}{bg}
\filldraw[fill=black!10!white, draw=black] (213.center) -- (21354.center) -- (2134.center) -- cycle;
\filldraw[fill=black!10!white, draw=black] (123.center) -- (21354.center) -- (2134.center) -- cycle;
\filldraw[fill=black!10!white, draw=black] (123.center) -- (21354.center) -- (1243.center) -- cycle;
\filldraw[fill=black!10!white, draw=black] (132.center) -- (21354.center) -- (1243.center) -- cycle;
\filldraw[fill=black!10!white, draw=black] (123.center) -- (12435.center) -- (1243.center) -- cycle;
\filldraw[fill=black!10!white, draw=black] (132.center) -- (12435.center) -- (1243.center) -- cycle;
\filldraw[fill=black!10!white, draw=black] (132.center) -- (12435.center) -- (1324.center) -- cycle;
\filldraw[fill=black!10!white, draw=black] (213.center) -- (12435.center) -- (1324.center) -- cycle;
\end{pgfonlayer}{bg}

\end{tikzpicture}
\caption{The order complex of the interval $[12,213546]$ in $\bigP$ from Figure~\ref{fig:12-213546}. Only chains in the \emph{open} interval $\ost$ are included in the order complex of $\ist$; including $\sigma$ and $\tau$ would simply result in the join of the order complex with a line segment and hence would be contractible.}
\label{fig:ordercomplex}
\end{center}
\end{figure}

\subsection{The M\"obius function papers}\label{sub:mobiuspapers}

It remains a wide open problem to determine the M\"obius function for the \emph{classical} pattern poset; see \cite{BJJS11,McSt15,SaVa06,Smi14,Smi15,Smi16,StTe10} for special cases.  In contrast, the M\"obius function for the consecutive pattern poset $\bigP$ has been determined independently by Bernini, Ferrari and Steingr\'imsson in \cite{BFS11} and by Sagan and Willenbring in \cite{SaWi12}.  We will present this recursive formula as stated in the latter paper here because it introduces some key ideas we will need later, and then we will explain the parts of the viewpoint of the former paper that will be useful to us.

 We say that $\sigma$ is a \emph{prefix} (respectively \emph{suffix}) of $\tau \in \S_n$ if $\sig=\subpermtau{1}{i}$ (resp.\ $\sigma=\subpermtau{n-i+1}{n}$) for some $i$, and is a \emph{proper} prefix (resp.\ suffix) if it does not equal $\tau$. For example, $312$ is a proper prefix of $51342$. We say that $\sigma$ is a \emph{bifix} of $\tau$ if it is both a proper prefix and proper suffix of $\tau$.    The \emph{exterior} of $\tau$, denoted $x(\tau)$, is the longest bifix of $\tau$.  Note that $x(\tau)$ always exists whenever $\tau > 1$ since $1$ is a proper prefix and suffix of $\tau$ in that case. The \emph{interior} of $\tau$ is $\subpermtau{2}{n-1}$ and is denoted $i(\tau)$.  For example, if $\tau=21435$, then $x(\tau) = 213$ and $i(\tau) = 132$. We say that $\tau$ is \emph{monotone} if it equals $12\ldots n$ or $n\ldots21$. 
 
\begin{theorem}[\cite{BFS11,SaWi12}]\label{thm:mobius}
For $\sigma, \tau \in \bigP$ with $\sigma \le \tau$, 
\[
\mu(\sigma,\tau) = \left\{
\begin{array}{ll}
\mu(\sigma, x(\tau)) & \mbox{if $|\tau| - |\sigma| > 2$ and $\sigma \le x(\tau) \not\le i(\tau)$}, \\
1 & \mbox{if $|\tau| - |\sigma| = 2$, $\tau$ is not monotone, and $\sigma \in \{i(\tau), x(\tau)\}$}, \\
(-1)^{|\tau| - |\sigma|} & \mbox{if $|\tau| - |\sigma| < 2$},\\
0 & otherwise.
\end{array}
\right.
\]
\end{theorem}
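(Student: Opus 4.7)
The plan is to prove the formula by strong induction on the rank $r := |\tau| - |\sigma|$ of the interval, via the M\"obius recursion $\mu(\sigma,\tau) = -\sum_{\sigma \le \rho < \tau} \mu(\sigma,\rho)$. The base cases $r\in\{0,1\}$ are immediate from the definition of $\mu$. For $r=2$ I would enumerate the atoms of the open interval directly: any $\rho$ with $\sigma<\rho<\tau$ has $|\rho|=|\tau|-1$ and is therefore the reduction of either $\tau_1\ldots\tau_{n-1}$ or $\tau_2\ldots\tau_n$, where $n=|\tau|$. The constraint $\sigma\le\rho$ together with $|\sigma|=|\rho|-1$ then forces $\sigma$ to be a length-$|\sigma|$ prefix or suffix of $\rho$, and in turn forces $\sigma$ itself to be one of $i(\tau)$, the length-$|\sigma|$ prefix of $\tau$, or the length-$|\sigma|$ suffix of $\tau$. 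A finite case check, isolating when the two atoms coincide (which, one verifies, happens precisely when $\tau$ is monotone), delivers the stated values.

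For the inductive step $r \ge 3$, the plan is to apply the induction hypothesis to each summand $\mu(\sigma,\rho)$ and group the nonvanishing terms. By induction, the only contributing $\rho$ are those of rank at most $2$ above $\sigma$ (contributing $\pm 1$ terms, parametrized by certain prefixes, suffixes, and interiors of $\rho$) and those with $\sigma\le x(\rho) \not\le i(\rho)$, whose M\"obius values are themselves given by the formula at $x(\rho)$. The target identity is that these contributions sum to $-\mu(\sigma, x(\tau))$ when $\sigma\le x(\tau)\not\le i(\tau)$ and to $-1$ (cancelling the leading $\mu(\sigma,\sigma)=1$) in the ``otherwise'' case. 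I would establish this by building a sign-reversing involution on the set of contributing $\rho$, pairing each $\rho$ with the permutation obtained by adding or deleting a carefully chosen boundary entry, indexed by a distinguished occurrence of $\sigma$ in $\tau$. The critical (unmatched) element, when present, should correspond to $x(\tau)$ and encode the recurrence $\mu(\sigma,\tau)=\mu(\sigma,x(\tau))$.

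The chief obstacle is the construction and verification of this involution, and in particular its sensitivity to the condition $\sigma\not\le i(\tau)$. When $\sigma \le i(\tau)$, interior occurrences of $\sigma$ provide extra matching partners that cancel out the would-be surviving term, explaining the jump in value from $\mu(\sigma,x(\tau))$ to $0$. An alternative route is to replace the recursion-based approach with Philip Hall's chain formula $\mu(\sigma,\tau)=\sum_{k\ge 0} (-1)^k c_k(\sigma,\tau)$, where $c_k$ counts $k$-chains in the open interval, and build a discrete Morse matching on chains whose critical cells correspond exactly to those chains that pass through $x(\tau)$; this would reduce the theorem to verifying the single recurrence $\mu(\sigma,\tau)=\mu(\sigma,x(\tau))$ and the vanishing criterion. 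Either route pivots on a canonical choice of ``special'' occurrence of $\sigma$ in $\tau$, which is the combinatorial heart of the theorem.
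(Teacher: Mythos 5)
This theorem is not proved in the paper at all: it is quoted as a known result from Bernini--Ferrari--Steingr\'{\i}msson \cite{BFS11} and Sagan--Willenbring \cite{SaWi12}, so there is no ``paper's own proof'' to compare against. For the record, the two cited references take different routes: \cite{BFS11} argues combinatorially, with a key lemma analyzing what happens when $\sigma$ straddles $\tau$ (a notion the present paper borrows in Section~3), while \cite{SaWi12} proves the formula together with the homotopy type via discrete Morse theory on the order complex. Your alternative route via Philip Hall's chain formula and a Morse matching is, at the level of strategy, essentially the Sagan--Willenbring approach; your primary route (strong induction on $|\tau|-|\sigma|$ using the recursion $\mu(\sigma,\tau)=-\sum_{\sigma\le\rho<\tau}\mu(\sigma,\rho)$, resolved by a sign-reversing involution) is closer in spirit to \cite{BFS11}.

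That said, as it stands the proposal has a genuine gap: the sign-reversing involution (or, in the alternative route, the Morse matching) is named but never constructed. Everything difficult is deferred to it. You correctly identify that the involution must be sensitive to whether $\sigma\le i(\tau)$, and that a single unmatched element should survive and encode $\mu(\sigma,x(\tau))$, but you do not specify the pairing rule, verify it is an involution on the contributing set, confirm it reverses sign, or check the edge cases where $x(\rho)$ has length close to $|\rho|$ (e.g.\ $\rho$ monotone, or $|\rho|-|x(\rho)|=2$, where the rank-$2$ clause of the formula kicks in). These are precisely the points where the combinatorics is delicate; without them the argument is a plausible outline rather than a proof. The $r=2$ base case is also more fiddly than your one sentence suggests: you need to determine exactly when the open interval is a two-element antichain versus a single point, which requires knowing when $\sigma$ lies below \emph{both} $\tau_{[1,n-1]}$ and $\tau_{[2,n]}$, and this is where the conditions $\sigma\in\{i(\tau),x(\tau)\}$ and ``$\tau$ not monotone'' (via Lemma~\ref{lem:monotone}) enter; that case check should be written out rather than waved at.
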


It is clear from this important theorem that the exterior of a permutation is worthy of attention, and yet this attention seems to be absent from the literature.  This motivates our detailed consideration of the statistic $|x(\tau)|$ for general permutations in Subsections~\ref{sub:lengthexterior} and~\ref{sub:asymp_ext}.
In particular, we show that the expected value of $|x(\tau)|$ as $|\tau| \to \infty$ is bounded between $e-1$ and~$e$.  

In \cite{BFS11}, if the exterior $x(\tau)$ satisfies the condition that $\sigma \le x(\tau) \not\le i(\tau)$, then $x(\tau)$ is called the \emph{carrier element} of $\ist$.  As we see in Theorem~\ref{thm:mobius}, when $\ist$ has a carrier element we get the interesting case where the calculation of $\mu(\sigma,\tau)$ reduces to determining $\mu(\sigma,x(\tau))$.  It is therefore natural to ask under what conditions $\ist$ has a carrier element.  This motivates our consideration in Subsection~\ref{sub:carrier} of the permutations $\tau$ such that $[1,\tau]$ has a carrier element.  In this case of $\sigma=1$, $x(\tau)$ can be called the \emph{carrier element of} $\tau$, and it exists if and only if $x(\tau) \not\le i(\tau)$.  We determine the asymptotic probability of a permutation not having a carrier element and, as an application, prove that the M\"obius function is almost always 0, in a particular precise way.  

\subsection{Initial observations}
\ One feature of the consecutive pattern poset that makes it more tractable than the classical pattern poset is that any permutation $\tau$ of length $n$  covers at most two elements, namely $\subpermtau{1}{n-1}$ and $\subpermtau{2}{n}$. The next lemma will be useful later; it appears in \cite{BFS11,SaWi12} and is routine to check.

\begin{lemma}\label{lem:monotone}
For $\tau \in \bigP$ of length $n$, we have $\subpermtau{1}{n-1} = \subpermtau{2}{n}$ if and only if $\tau$ is monotone.
\end{lemma}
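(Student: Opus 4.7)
The forward direction is the easy one: if $\tau = 12\ldots n$ or $\tau = n\ldots 21$, then both $\subpermtau{1}{n-1}$ and $\subpermtau{2}{n}$ reduce to $12\ldots(n-1)$ or $(n-1)\ldots 21$, respectively, so they agree. I would just note this in a sentence.

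For the reverse direction, my plan is to unpack what the equality $\subpermtau{1}{n-1} = \subpermtau{2}{n}$ really says about $\tau$ entry by entry. Recall that two sequences of distinct integers have the same reduction if and only if they induce the same order relations on corresponding positions. Applying this to the two length-$(n-1)$ sequences $\tau_1\tau_2\ldots\tau_{n-1}$ and $\tau_2\tau_3\ldots\tau_n$ gives the following clean reformulation: for all $j,k \in [n-1]$,
\[
\tau_j < \tau_k \iff \tau_{j+1} < \tau_{k+1}.
\]
This is the key observation of the proof.

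Now I would specialize to $k = j+1$ for $j \in [n-2]$, which yields $\tau_j < \tau_{j+1} \iff \tau_{j+1} < \tau_{j+2}$. Thus the sign of $\tau_{j+1} - \tau_j$ is the same for every $j \in [n-1]$: either every consecutive pair is an ascent, giving $\tau = 12\ldots n$ (since $\tau \in \S_n$), or every consecutive pair is a descent, giving $\tau = n\ldots 21$. In either case $\tau$ is monotone, completing the proof.

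I do not expect any significant obstacle here; the only subtlety is being careful that the index $k=j+1$ actually lies within the allowed range $[n-1]$, which forces $j \le n-2$ and gives exactly the $n-2$ comparisons needed to chain all consecutive entries together. Everything else is immediate from the definition of reduction.
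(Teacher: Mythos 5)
Your proof is correct. The paper does not actually supply a proof of this lemma, remarking only that it ``appears in \cite{BFS11,SaWi12} and is routine to check,'' so there is no argument to compare against; your argument---reformulating equality of reductions as preservation of all order relations, then specializing to adjacent indices $k = j+1$ to propagate the sign of $\tau_{j+1} - \tau_j$ across all $j$---is exactly the kind of routine verification being alluded to, and it is complete and careful about the index ranges.
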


As a consequence, we get our first structural result about intervals in $\bigP$.

\begin{proposition}\label{pro:chain}
The interval $[\sigma,\tau]$ in $\bigP$ is a chain if and only if either
\begin{itemize}
\item $\tau$ is monotone, or
\item $\sigma$ occurs exactly once in $\tau$ and does so as a prefix or a suffix.
\end{itemize}
\end{proposition}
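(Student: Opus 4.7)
The plan is to exploit Lemma~\ref{lem:monotone}: writing $n=\len{\tau}$, $\alpha=\subpermtau{1}{n-1}$, and $\beta=\subpermtau{2}{n}$, the only elements that $\tau$ can possibly cover in $\bigP$ are $\alpha$ and $\beta$, and $\alpha=\beta$ precisely when $\tau$ is monotone. So the structure at the top of $\ist$ is completely controlled by whether each of $\alpha$ and $\beta$ lies above $\sigma$.

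For the ``if'' direction, first suppose $\tau$ is monotone. Every consecutive subpattern of $\tau$ is monotone of the same type, and monotone permutations of a given type are linearly ordered by length, so $\ist$ is a chain. Next suppose $\sigma$ occurs in $\tau$ exactly once, and as a prefix (the suffix case is symmetric), and induct on $\len{\tau}-\len{\sigma}$. The base case $\sigma=\tau$ is trivial. Otherwise the prefix occurrence lies in positions $[1,\len{\sigma}]\subseteq[1,n-1]$, so $\sigma\leq\alpha$; any occurrence of $\sigma$ at position $i\geq 2$ would be an occurrence in $\beta$, contradicting uniqueness, so $\sigma\not\leq\beta$. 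Therefore $\ist=[\sigma,\alpha]\cup\{\tau\}$. Any occurrence of $\sigma$ in $\alpha$ is also an occurrence in $\tau$, so $\sigma$ still occurs uniquely in $\alpha$, and it does so as a prefix; by the inductive hypothesis $[\sigma,\alpha]$ is a chain, hence so is $\ist$.

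For the ``only if'' direction I argue the contrapositive: assume $\tau$ is not monotone and the second bullet fails, and produce two incomparable elements in $\ist$. Since $\tau$ is not monotone, $\alpha\neq\beta$. The case $\sigma=\tau$ already satisfies the second bullet, so $\sigma<\tau$, which gives $\len{\sigma}\leq n-1$ and forces every occurrence of $\sigma$ in $\tau$ to lie entirely in $\alpha$ or in $\beta$. Thus at least one of $\alpha,\beta$ is $\geq\sigma$. If both are, then $\alpha,\beta\in\ist$ are distinct and of equal length, hence incomparable, and $\ist$ is not a chain. Otherwise, exactly one---say $\alpha$---satisfies $\alpha\geq\sigma$, and then every occurrence of $\sigma$ in $\tau$ must start at position $1$ (otherwise it would lie in $\beta$), so $\sigma$ occurs uniquely in $\tau$ as a prefix, contradicting the assumed failure of the second bullet.

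The only real subtlety is propagating the ``unique prefix occurrence'' condition from $\tau$ down to $\alpha$ in the inductive step; this works cleanly because occurrences inside $\alpha$ are exactly the occurrences in $\tau$ that avoid the last position, and our uniqueness hypothesis rules out any occurrence starting at position $i\geq 2$ anyway. All other steps are routine case analysis on which of $\alpha,\beta$ lies above $\sigma$, with the degenerate cases $\sigma=\tau$ and $\alpha$ monotone both handled by the monotone branch of the proposition.
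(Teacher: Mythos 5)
Your proof is correct and follows the same underlying idea as the paper: since $\tau$ can cover at most the two elements $\alpha=\subpermtau{1}{n-1}$ and $\beta=\subpermtau{2}{n}$, chainhood of $\ist$ is governed by whether both of these lie above $\sigma$ (not a chain), exactly one does (unique prefix/suffix occurrence), or $\alpha=\beta$ ($\tau$ monotone, by Lemma~\ref{lem:monotone}). The paper's own proof dispatches the ``if'' direction as ``clear'' and states the ``only if'' direction in a single sentence; your version fills in those details — in particular the induction pushing the unique-prefix-occurrence property from $\tau$ down to $\alpha$, and the check that having both $\alpha,\beta\in\ist$ produces an antichain — but the route is the same.
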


\begin{proof}
The ``if'' direction is clear, while for the ``only if'' direction we observe that
if $[\sigma,\tau]$ is a chain, then $\tau$ covers just one element, leading to the two given possibilities. 
\end{proof}

As previously shown in \cite{BFS11}, we can completely determine the structure of $\ist$ when $\sigma$ has just one occurrence in $\tau$, a result we prove here for the purposes of self-containment. This contrasts with the classical pattern poset, where the corresponding result is unknown.

\begin{proposition}[\cite{BFS11}]\label{pro:occursonce}
For an interval $\ist$ in $\bigP$, if $\sigma$ occurs exactly once in $\tau$ then $\ist$ is a product of two chains.  Moreover, if $\sig=\subpermtau{i}{j}$, then these chains have lengths $i$ and $\len{\tau}-j+1$.
\end{proposition}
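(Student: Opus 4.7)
The plan is to exhibit an explicit poset isomorphism between $\ist$ and the grid $\{1,\ldots,i\} \times \{j,\ldots,\len{\tau}\}$ with componentwise order (reversed in the first coordinate), which is a product of a chain with $i$ elements and a chain with $\len{\tau}-j+1$ elements. The bijection sends a pair $(a,b)$ to $\subperm{\tau}{a}{b}$.

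First I would check that every $\pi \in \ist$ has the form $\pi = \subperm{\tau}{a}{b}$ with $1 \le a \le i$ and $j \le b \le \len{\tau}$. Since $\pi \le \tau$, we have $\pi = \subperm{\tau}{a}{b}$ for some $1\le a\le b\le\len{\tau}$. As $\sigma \le \pi$, there is an occurrence of $\sigma$ inside $\pi$, and composing this embedding with the inclusion $\subperm{\tau}{a}{b}\hookrightarrow\tau$ produces an occurrence of $\sigma$ in $\tau$ at positions contained in $[a,b]$. The unique-occurrence hypothesis then forces this occurrence to be the one at $[i,j]$, so $a\le i$ and $b\ge j$.

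Next I would show injectivity of $(a,b) \mapsto \subperm{\tau}{a}{b}$. By the same lifting argument, any occurrence of $\sigma$ inside $\pi=\subperm{\tau}{a}{b}$ lifts to an occurrence of $\sigma$ in $\tau$, which must be the one at $[i,j]$; hence $\sigma$ occurs exactly once in $\pi$, at positions $[i-a+1,\,j-a+1]$. Thus $\pi$ determines $a$ (and $b=a+\len{\pi}-1$), proving injectivity. Surjectivity onto the admissible pairs is immediate: for any $(a,b)$ in the admissible range, $\sigma$ occurs in $\subperm{\tau}{a}{b}$ at positions $[i-a+1,\,j-a+1]$, so $\sigma \le \subperm{\tau}{a}{b} \le \tau$.

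Finally, I would verify that the order on $\ist$ matches the grid order. One direction is clear: if $a' \le a$ and $b' \ge b$, then $\subperm{\tau}{a}{b}$ is a consecutive subpattern of $\subperm{\tau}{a'}{b'}$. For the converse, if $\subperm{\tau}{a}{b} \le \subperm{\tau}{a'}{b'}$, then an occurrence of $\subperm{\tau}{a}{b}$ inside $\subperm{\tau}{a'}{b'}$ lifts to an occurrence in $\tau$; since $\subperm{\tau}{a}{b}$ itself contains a unique copy of $\sigma$ that must lift to the $[i,j]$-occurrence in $\tau$, the lifted occurrence of $\subperm{\tau}{a}{b}$ must be precisely at positions $[a,b]$ of $\tau$, forcing $a'\le a$ and $b'\ge b$. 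The main obstacle is really just the uniqueness argument in the second step; once one notes that unique occurrence of $\sigma$ in $\tau$ propagates, via lifting, to unique occurrence of each intermediate $\pi$ in $\tau$, the grid structure drops out immediately.
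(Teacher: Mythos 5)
Your proof is correct and follows essentially the same route as the paper's: both exhibit a bijection between position data $(a,b)$ and elements of $[\sigma,\tau]$, use the unique-occurrence hypothesis to show the bijection is injective, and verify that the partial order corresponds to componentwise (possibly reversed) order on pairs, yielding a product of two chains of the stated lengths. The only difference is cosmetic: the paper parametrizes by the number of entries deleted from each end (giving an order-reversing map to a grid), whereas you parametrize by the endpoints $[a,b]$ directly; you also spell out the lifting argument for injectivity and order-compatibility more explicitly than the paper, which simply asserts these.
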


\begin{proof}
Any $\pi \in \ist$ is obtained from $\tau$ by deleting $a$ entries from the left end of $\tau$ and $b$ entries from the right end of $\tau$, where 
\begin{equation}\label{equ:productofchains}
0\leq a\leq i-1 \mbox{\ \ \ and\ \ \ } 0 \leq b \leq \len{\tau}-j.
\end{equation}
Thus there is a surjective map $\phi$ from such tuples $(a,b)$ to the elements $\pi$ of $\ist$.  This map is injective since there is just one occurrence of $\sigma$ in $\tau$, allowing us to uniquely determine $a$ and $b$ from $\pi$.  We also see that $\phi$ is order-reversing in the sense that $\phi(a,b) \leq \phi(a',b')$ in $\ist$ if and only if $a \geq a'$ and $b \geq b'$ as integers.  Therefore, $\ist$ is order-isomorphic to the dual of a product of two chains, which is itself a product of two chains.  The lengths of the chains follow from \eqref{equ:productofchains}.
\end{proof}

\section{Disconnectivity}\label{sec:disconnectivity}

We say that an interval $\ist$ is \emph{disconnected} (resp.\ \emph{shellable}) if $\Delta(\sigma,\tau)$ is disconnected (resp.\ shellable).  Equivalently, $\ist$ is disconnected if the Hasse diagram of the open interval $\ost$ is disconnected.  For example, in Figure~\ref{fig:12-213546}, the subinterval $[213,213546]$ is disconnected. In examining the structure of intervals of $\bigP$, a natural question is to ask when such intervals $\ist$ are disconnected, preferably giving the answer in terms of simple conditions on $\sigma$ and $\tau$.  We answer this question in Theorem~\ref{thm:disconnected} below.  Determining when an interval contains a non-trivial disconnected subinterval is a more difficult task, which we explore following Theorem~\ref{thm:disconnected}. Interestingly, as we show in Theorem~\ref{thm:aadiscon}, almost all intervals in $\bigP$ do contain such a disconnected subinterval.  

\subsection{Characterization of disconnected intervals}

It will be helpful to deal with posets of rank 2 separately because we can completely classify them, and because they sometimes require special treatment.  In particular, unlike disconnected intervals of rank at least 3, a disconnected interval of rank 2 is shellable; we will study the topic of shellability in detail in the next section.  Since every element of $\bigP$ covers at most 2 elements, a rank-2 interval in $\bigP$ is either a chain or has two elements of rank 1.  Thus the structure of rank-2 intervals is completely determined by Proposition~\ref{pro:chain}.  With rank-2 disconnected intervals now classified, we will follow \cite{McSt15} in saying that a disconnected interval is \emph{non-trivial} if it has rank at least 3.

\begin{definition}
For $\sigma< \tau$, we say that $\sigma$ \emph{straddles} $\tau$ if $\sigma$ is both a prefix and a suffix of $\tau$ and has no other occurrences in $\tau$.
\end{definition}

It is easy to check that $\sigma$ straddles $\tau$ if and only if $\sigma$ is the carrier element of $\ist$, as defined in the last paragraph of Subsection~\ref{sub:mobiuspapers}. Either way we state it, this is exactly the condition that causes non-trivial disconnectedness of $\ist$, as we now show.

\begin{theorem}\label{thm:disconnected}
For $\sigma, \tau \in \bigP$ with $\len{\tau} - \len{\sig} \geq 3$, we have that $\ist$ is disconnected if and only if $\sigma$ straddles $\tau$.
\end{theorem}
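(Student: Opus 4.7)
The plan is to prove both directions by analyzing the two elements covered by $\tau$, namely $\tau_L = \subpermtau{1}{n-1}$ and $\tau_R = \subpermtau{2}{n}$ with $n = |\tau|$. Lemma~\ref{lem:monotone} gives $\tau_L \ne \tau_R$ unless $\tau$ is monotone, in which case Proposition~\ref{pro:chain} makes $\ist$ a chain and we are done. For any $\pi \in \ost$ we have $|\pi| < n$, so any occurrence of $\pi$ in $\tau$ at positions $[i,j]$ satisfies $i \ge 2$ or $j \le n-1$, whence $\pi \le \tau_R$ or $\pi \le \tau_L$. Each $\pi$ is therefore connected via a saturated chain inside $\ost$ to $\tau_L$ or to $\tau_R$, and disconnectedness of $\ost$ is equivalent to the absence of any $\pi \in \ost$ lying below both $\tau_L$ and $\tau_R$.

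For the $(\Leftarrow)$ direction, assume $\sigma$ straddles $\tau$, so $\sigma$ occurs in $\tau$ only at $[1,k]$ and $[n-k+1,n]$ where $k=|\sigma|$. I would first observe that straddling forces $\sigma = x(\tau)$: if $x(\tau)$ were strictly longer, then placing $x(\tau)$ at the suffix of $\tau$ combined with $\sigma$ appearing as a prefix of $x(\tau)$ would yield a non-prefix, non-suffix occurrence of $\sigma$ in $\tau$. Given any $\pi \in \ost$, combining an occurrence of $\sigma$ in $\pi$ at $[a,b]$ with an occurrence of $\pi$ in $\tau$ at $[i,j]$ gives an occurrence of $\sigma$ in $\tau$ at $[i+a-1,\,i+b-1]$, which must be $[1,k]$ or $[n-k+1,n]$. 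A short case analysis shows the first forces $i=1$ with $\sigma$ a prefix of $\pi$, and the second forces $j=n$ with $\sigma$ a suffix of $\pi$. Both cannot occur for the same $\pi$, since then $\pi$ would be a bifix of $\tau$ of length exceeding $|x(\tau)|=k$. So $\ost = P \sqcup S$, where $P$ collects those $\pi$ that are prefixes of $\tau$ and $S$ those that are suffixes; both sets are nonempty ($\tau_L \in P$, $\tau_R \in S$) and order-incomparable, since a relation $\pi_1 < \pi_2$ across the partition would produce a second occurrence of $\pi_1$ in $\tau$ placing it in $P \cap S = \emptyset$.

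For $(\Rightarrow)$ I argue the contrapositive. If $\sigma$ has a unique occurrence in $\tau$, Proposition~\ref{pro:occursonce} identifies $\ist$ with a product of two chains, and the only disconnected such product has rank $2$, which is ruled out by $|\tau|-|\sigma|\ge 3$. Otherwise $\sigma$ has at least two occurrences; since there is at most one prefix occurrence and at most one suffix occurrence, failure of straddling forces some occurrence at $[i,j]$ with $i\ge 2$ and $j\le n-1$. Then $i(\tau) = \subpermtau{2}{n-1}$ contains $\sigma$, has length $n-2 > k$ by the rank hypothesis, and satisfies $i(\tau) \le \tau_L$ as well as $i(\tau) \le \tau_R$; so $\tau_L$ and $\tau_R$ lie in the same connected component via $i(\tau)$. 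The most delicate step is the characterization of straddling in terms of $x(\tau)$ and $i(\tau)$, which underwrites the bifix argument in the backward direction; the hypothesis $|\tau|-|\sigma|\ge 3$ enters precisely to keep $i(\tau)$ inside $\ost$ and to rule out the exceptional rank-$2$ product of two chains.
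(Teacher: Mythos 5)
Your proof is correct and shares the paper's key structural insight: both arguments pivot on the element $i(\tau)=\subpermtau{2}{n-1}$ and the two elements $\tau_L,\tau_R$ covered by $\tau$. For the backward direction you give essentially the paper's two-chain decomposition, spelled out more carefully (and noting that straddling forces $\sigma=x(\tau)$, an observation the paper does not need). For the forward direction you take the contrapositive and split into cases according to the number of occurrences of $\sigma$, invoking Proposition~\ref{pro:occursonce} for the unique-occurrence case; the paper avoids this split by arguing directly that disconnectedness forces $\sigma\le\tau_L$, $\sigma\le\tau_R$, and $\sigma\not\le i(\tau)$, from which straddling follows in one step. One small imprecision in your framing paragraph: the claim that disconnectedness of $\ost$ is \emph{equivalent} to the absence of a $\pi\in\ost$ below both $\tau_L$ and $\tau_R$ fails when one of $\tau_L,\tau_R$ is not itself in the open interval. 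For instance, if $\sigma$ occurs in $\tau$ only as a prefix, then $\sigma\not\le\tau_R$ and no $\pi\in\ost$ lies below $\tau_R$, yet $\ost$ is a connected chain. You never actually use the equivalence in that problematic direction---the product-of-chains argument handles the case separately---but the equivalence as stated should carry the extra hypothesis $\tau_L,\tau_R\in\ost$.
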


\begin{proof}
Let $n=\len{\tau}$. If $\sigma$ straddles $\tau$, then the open interval $\ost$ consists of two chains: one containing the elements $\subpermtau{1}{j}$ for $\len{\sigma}<j<n$,
and one containing elements $\subpermtau{n+1-j}{n}$ for $\len{\sig}<j<n$. These chains are disjoint since $\sigma$ appears as a prefix in the elements of the former chain, and as a suffix in the elements of the latter chain. Thus $\ost$ is disconnected, as required.  This argument is very similar to that of \cite[Lemma~3.3]{BFS11}, where their conclusion is that $\mu(\sigma,\tau)=1$.

To prove the converse, suppose now that $\ost$ is disconnected. In $\bigP$, the permutation $\tau$
covers at most two elements, namely $\subpermtau{1}{n-1}$ and $\subpermtau{2}{n}$.
If these two elements are equal or if one of them avoids $\sigma$, then $\ost$ contains a unique element of length $n-1$, which contradicts the fact that $\ost$ is disconnected.
Thus, we have $\sigma<\subpermtau{1}{n-1}$, $\sigma<\subpermtau{2}{n}$, and $\subpermtau{1}{n-1}\neq\subpermtau{2}{n}$.

In $\bigP$, the permutation $\subpermtau{2}{n-1}$ is covered by both $\subpermtau{1}{n-1}$ and $\subpermtau{2}{n}$. Note that $\subpermtau{2}{n-1}\neq \sigma$, because $\len{\subpermtau{2}{n-1}}=n-2>\len{\sig}$.
If $\subpermtau{2}{n-1}\in\ost$, then the interval $\ost$ would be connected, because from every element there would be a path in the Hasse diagram to either $\subpermtau{1}{n-1}$ or $\subpermtau{2}{n}$, and thus to $\subpermtau{2}{n-1}$.
Therefore, $\subpermtau{2}{n-1}\notin\ost$, which implies that $\subpermtau{2}{n-1}$ avoids $\sigma$. Together with the fact that
$\sigma$ is contained in both $\subpermtau{1}{n-1}$ and $\subpermtau{2}{n}$, it follows that $\sigma$ straddles~$\tau$.
\end{proof}

As we will observe in the next section, an interval is non-shellable if it contains a non-trivial disconnected subinterval.  Therefore the following direct consequence of Theorem~\ref{thm:disconnected} is important to the classification of shellable intervals.

\begin{corollary}\label{cor:disconnected}
An interval $\ist$ contains a non-trivial disconnected subinterval if and only if there exists $\pi \in \ist$ such that there are two adjacent occurrences of $\pi$ in $\tau$ that are offset from each other by at least 3 positions.  Specifically, if $\subseqtau{i}{j}$ is the minimal consecutive subsequence of $\tau$ containing these two adjacent occurrences, then $[\pi, \subpermtau{i}{j}]$ is disconnected.
\end{corollary}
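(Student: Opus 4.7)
The plan is to derive this essentially as a restatement of Theorem~\ref{thm:disconnected}: a rank-$\geq 3$ subinterval $[\pi,\rho] \subseteq [\sigma,\tau]$ is disconnected precisely when $\pi$ straddles $\rho$, and the corollary simply re-expresses ``$\pi$ straddles some consecutive subpattern $\rho = \subpermtau{i}{j}$ of length $\geq |\pi|+3$'' in terms of two sufficiently separated adjacent occurrences of $\pi$ in $\tau$. I would interpret ``adjacent occurrences'' as two occurrences of $\pi$ in $\tau$ with no other occurrence of $\pi$ starting strictly between their starting positions, and ``offset by at least $3$ positions'' as a difference of starting positions of at least $3$.

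For the forward direction, suppose $[\pi,\rho]$ is a non-trivial disconnected subinterval of $\ist$. Theorem~\ref{thm:disconnected} gives $\pi$ straddles $\rho$ and $|\rho|-|\pi| \geq 3$. Fix any occurrence of $\rho$ in $\tau$, say at positions $[k,k+|\rho|-1]$. Then the prefix and suffix occurrences of $\pi$ in $\rho$ translate to occurrences of $\pi$ in $\tau$ at positions $k$ and $k+(|\rho|-|\pi|)$, so their offset is $|\rho|-|\pi| \geq 3$. Any occurrence of $\pi$ in $\tau$ starting strictly between these two positions would lie entirely inside $\subseqtau{k}{k+|\rho|-1}$ and thus produce a third occurrence of $\pi$ in $\rho$, contradicting the straddle condition. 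So the two occurrences are adjacent in $\tau$.

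For the backward direction, suppose $\pi \in \ist$ has two adjacent occurrences in $\tau$ offset by $d \geq 3$, and let $\subseqtau{i}{j}$ be the minimal consecutive subsequence of $\tau$ containing both; write $\rho = \subpermtau{i}{j}$. Then the first occurrence is a prefix of $\rho$ and the second is a suffix, and $|\rho|-|\pi| = d \geq 3$. Adjacency in $\tau$ ensures that no occurrence of $\pi$ starts strictly between positions $i$ and $i+d$, and any occurrence of $\pi$ inside $\rho$ must start in that range; thus $\pi$ has exactly these two occurrences in $\rho$, i.e., $\pi$ straddles $\rho$. Since $\sigma \leq \pi \leq \rho \leq \tau$ and $|\rho|-|\pi|\geq 3$, Theorem~\ref{thm:disconnected} shows $[\pi,\rho]$ is a non-trivial disconnected subinterval of $\ist$.

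There is no substantive obstacle beyond the minor bookkeeping of translating between the ``straddle'' formulation and the ``adjacent occurrences'' formulation; the only point requiring slight care is the equivalence between the straddle condition ``$\pi$ has no occurrence in $\rho$ other than its prefix and suffix occurrences'' and the adjacency of the two corresponding occurrences in the ambient $\tau$, which holds because every potentially intervening occurrence in $\tau$ is forced to lie inside $\rho$.
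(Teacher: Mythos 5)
Your proof is correct and is exactly the argument the paper has in mind: the paper states Corollary~\ref{cor:disconnected} as a ``direct consequence'' of Theorem~\ref{thm:disconnected} without writing out details, and your unpacking of the equivalence between the straddle condition on a subinterval $[\pi,\rho]$ and the existence of two adjacent occurrences of $\pi$ in $\tau$ offset by at least $3$ is the natural way to make the reduction precise. The one point worth flagging (which you handle correctly) is that an occurrence of $\pi$ in $\tau$ starting strictly between the prefix and suffix occurrences of a fixed occurrence of $\rho$ is automatically contained inside that occurrence of $\rho$, so ``no intervening occurrence in $\rho$'' and ``no intervening occurrence in $\tau$'' coincide.
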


For example, $[1,2143576]$ is not itself disconnected, but the occurrences of $\pi=21$ in $2143576$ start at positions $1$, $3$ and $6$, and we can check that $[\pi,\subpermtau{3}{7}] = [21,21354]$ is disconnected.

In the rest of the paper, we will use the term \emph{disconnected subintervals} to mean non-trivial disconnected subintervals.

\subsection{Finding disconnected intervals}\label{sub:finding_disconnected}
Finding a disconnected subinterval using Corollary~\ref{cor:disconnected} may require checking all possible $\pi\in[\sigma,\tau]$. Our next result implies that, in some cases, it is sufficient to search in the interval $[x(\tau),\tau]$, which may contain significantly fewer elements than $\ist$ (as in Example~\ref{exa:ext_discon} below). 

\begin{proposition}\label{prop:xtautau}
Suppose that $|x(\tau)|\neq2$. If $[1,\tau]$ contains a disconnected subinterval, then so does $[x(\tau),\tau]$.
\end{proposition}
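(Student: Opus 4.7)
The plan is to examine the positions at which $x=x(\tau)$ itself occurs in $\tau$ and to argue that either these occurrences directly produce a disconnected subinterval of $[x,\tau]$, or else the hypothesis cannot hold. Write $n=|\tau|$, $k=|x|$, and list the occurrences of $x$ in $\tau$ as $1=q_1<q_2<\dots<q_m=n-k+1$ (the prefix and suffix occurrences always sit at the ends). The case $|x|\le 1$ is immediate, since then $[x(\tau),\tau]=[1,\tau]$, so I will focus on $|x|\ge 3$.

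Suppose first that some gap $q_{i+1}-q_i\ge 3$. Within the subsequence $\sigma'=\tau_{[q_i,\,q_{i+1}+k-1]}$, the only occurrences of $x$ are at position $1$ and at position $q_{i+1}-q_i+1$, because any occurrence of $x$ inside $\sigma'$ would correspond to one in $\tau$ strictly between $q_i$ and $q_{i+1}$, of which there are none. Hence $x$ straddles $\sigma'$, and Theorem~\ref{thm:disconnected} delivers the desired disconnected subinterval $[x,\sigma']$ of $[x,\tau]$, which has rank $q_{i+1}-q_i\ge 3$.

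It remains to show that the opposite case, in which every $q_{i+1}-q_i\le 2$, contradicts the hypothesis. If $x$ is monotone, then two occurrences at gap $\le 2$ can be combined to force $\tau_{[q_i,\,q_{i+1}+k-1]}$ to be monotone (this is where $k\ge 3$ enters crucially: the overlap between the two $x$-windows has size $k-2\ge 1$, so the shared entries pin the two monotone segments together), and chaining these conclusions forces $\tau$ itself to be monotone, making $[1,\tau]$ a chain by Proposition~\ref{pro:chain} with no disconnected subintervals. If $x$ is non-monotone, then Lemma~\ref{lem:monotone} rules out two occurrences at gap $1$, so every gap equals exactly $2$ and $x$ occurs precisely at the odd positions $1,3,5,\dots,n-k+1$. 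I would then establish a shift-by-two invariance of $\tau$: for every $1\le i<j\le n-2$, the relative order of $\tau_i,\tau_j$ coincides with that of $\tau_{i+2},\tau_{j+2}$. The key point is that any pair with $j-i\le k-1$ lies in a common window $[q_\ell,q_\ell+k-1]$ whose shift $[q_{\ell+1},q_{\ell+1}+k-1]$ also reduces to $x$ and therefore gives the same relative order, and pairs at larger distance are handled by transitivity through intermediate positions.

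Granted the shift invariance, any pattern $\pi$ occurring at a position $p\le n-|\pi|-1$ automatically occurs at $p+2$, and the two exceptional positions $p\in\{n-|\pi|,n-|\pi|+1\}$ are too close on the right to yield an adjacent pair offset by $\ge 3$. Hence no pattern has two adjacent occurrences in $\tau$ offset by $\ge 3$, and Corollary~\ref{cor:disconnected} implies $[1,\tau]$ has no disconnected subinterval, contradicting the hypothesis. The main obstacle I foresee is making the transitivity step in the shift-by-two invariance watertight for pairs $(i,j)$ too far apart to lie in a single window; this is precisely where the hypothesis $|x|\ne 2$ is essential, since length-$2$ windows are too short to propagate comparisons between their endpoints.
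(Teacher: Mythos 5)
Your handling of the first case (some gap $\ge 3$ between consecutive occurrences of $x(\tau)$) is correct and matches the paper, as does the monotone case. But the shift-by-two invariance you posit for the non-monotone, all-gaps-$2$ case is genuinely false, and this is not a wrinkle that transitivity can repair. Shift-by-two invariance for \emph{all} pairs $(i,j)$ with $1\le i<j\le n-2$ is equivalent to $\tau_{[1,n-2]}=\tau_{[3,n]}$, i.e., $|x(\tau)|\ge n-2$, which does not follow from the gap hypothesis. Concretely, take $\tau=13264758$: here $x(\tau)=1324$ (so $k=4$) occurs exactly at positions $1,3,5$ with all gaps equal to $2$, yet $\tau_2=3<\tau_5=4$ while $\tau_4=6>\tau_7=5$, so the invariance fails for the pair $(2,5)$. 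Even the window-matching step breaks down there: $(2,5)$ has $j-i=3=k-1$, but there is no odd $q$ with $q\le 2$ and $5\le q+k-1$, so this pair lies in no odd-indexed window.

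The paper's proof deduces only shift-invariance of \emph{adjacent} pairs, concluding $\tau$ is up-down, and then isolates the only dangerous patterns $\pi$: those incomparable with $x(\tau)$, which are shown to have length exactly $k$ and occur only at even positions. The argument then splits on whether the valleys and peaks of $\tau$ are increasing or decreasing. When valleys increase, $1\oplus\pi$ is shown to straddle a subinterval of $[x(\tau),\tau]$, contradicting the standing assumption; when valleys and peaks both decrease, every even window reduces to the same pattern, forcing adjacent occurrences of $\pi$ to be offset by $2$, another contradiction; the remaining subcases are handled by reversal or by identifying $\tau$ as the unique up-down permutation with $|x(\tau)|=n-2$ (for which $[1,\tau]$ has no disconnected subinterval by inspection). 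To make your argument work, you would need to replace the global shift-invariance claim with this finer analysis of the length-$k$ patterns occurring at even positions.
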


Note that if $[\sigma,\tau]$ contains a disconnected subinterval, then so does $[1,\tau]$, and we can apply the above proposition.  When $\sigma\leq x(\tau)$, this result nicely complements Theorem~\ref{thm:mobius}, which says that computing $\mu(\sigma,\tau)$ often boils down to computing $\mu(\sigma,x(\tau))$. So, when determining the M\"obius function, the interesting part of the interval is often the part below $x(\tau)$, but when looking for disconnected subintervals, the interesting part is often above $x(\tau)$.

\begin{proof}
The statement is trivially true if $x(\tau)=1$, so let us assume that $|x(\tau)|\ge3$.  Let $n=|\tau|$ and $k=|x(\tau)|$.

Suppose that $[x(\tau),\tau]$ contains no disconnected subintervals. Then, by Corollary~\ref{cor:disconnected}, any two adjacent occurrences of $x(\tau)$ in $\tau$ must be offset by one or two positions. If $x(\tau)$ were monotone, then this would force $\tau$ to be monotone as well, but we know that in this case $[1,\tau]$ is a chain, which contains no disconnected subintervals.

Thus, we will assume that $x(\tau)$ is not monotone. By Lemma~\ref{lem:monotone}, no two occurrences of $x(\tau)$ in $\tau$ can be offset by one, so all adjacent occurrences of $x(\tau)$ in $\tau$ must be offset by two positions. Equivalently, $\subpermtau{j}{j+k-1}=x(\tau)$ for every odd $j$ with $1\le j\le n-k+1$, and note that $n$ and $k$ must have the same parity.
This implies that for every $i$, the order relationship between $\tau_i$ and $\tau_{i+1}$ is the same as between $\tau_{i+2}$ and $\tau_{i+3}$. By considering the complement of $\tau$ if necessary, we will assume without loss of generality that $\tau_1<\tau_2$, and thus $\tau_i<\tau_{i+1}$ for every odd $i$. Since $\tau$ is not monotone, we must then have $\tau_i>\tau_{i+1}$ for every even $i$, and so $\tau$ is an up-down permutation, that is, $\tau_1<\tau_2>\tau_3<\tau_4>\dots$.

By Corollary~\ref{cor:disconnected}, the only way for $[1,\tau]$ but not $[x(\tau),\tau]$ to contain a disconnected subinterval would be if there exists a permutation $\pi$ with $x(\tau)\nleq\pi\leq\tau$ which has two adjacent occurrences in $\tau$ offset by 3 or more. Note that we must also have $\pi\nleq x(\tau)$, since otherwise the offsets between adjacent occurrences of $\pi$ would be two. Thus, since $\pi$ is incomparable with $\subpermtau{j}{j+k-1}$ for every odd $j$, we must have that $|\pi|=k$ and any occurrence of $\pi$ in $\tau$ is of the form $\subpermtau{i}{i+k-1}$ for some even $i$. Let $a<b$ be even positions so that $\subpermtau{a}{a+k-1}$ and $\subpermtau{b}{b+k-1}$ are two adjacent occurrences of $\pi$ offset by 3 or more, that is, $\pi$ straddles $\subpermtau{a}{b+k-1}$.

Suppose first that $\tau_1<\tau_3$. Since $k\ge3$, the fact that the patterns $\subpermtau{j}{j+k-1}$ are equal for every odd $j$ implies that $\tau_1<\tau_3<\tau_5<\tau_7<\dots$, that is, the valleys of $\tau$ are increasing.
Let $\sigma=1\oplus\pi$. Since $\tau_{a-1}$ and $\tau_{b-1}$ are valleys, we have that $\subpermtau{a-1}{a+k-1}=\subpermtau{b-1}{b+k-1}=\sigma$. It follows that $\sigma$ straddles $\subpermtau{a-1}{b+k-1}$, since this latter permutation cannot have a third occurrence of $\sigma$ without $\subpermtau{a}{b+k-1}$ having a third occurrence of $\pi$. Additionally, $x(\tau)=\subpermtau{a-1}{a+k-2}\leq\sigma$, since $a-1$ is odd.
Therefore, $[\sigma,\subpermtau{a-1}{b+k-1}]$ is a disconnected subinterval contained in $[x(\tau),\tau]$, contradicting our original assumption.

Now consider the case  $\tau_1>\tau_3$. We now have that the valleys of $\tau$ are decreasing.
If $k$ is odd, then so is $n$, and we can instead consider the reversal of $\tau$, whose first three entries form the pattern $132$, and apply the above argument to it.
If $k$ is even, then $k\ge4$, and so the order relationship between $\tau_2$ and $\tau_4$ determines whether the peaks of $\tau$ are increasing or decreasing. If they are increasing, then $\tau$ is the unique up-down permutation of length $n$ with decreasing valleys and increasing peaks, and $[1,\tau]$ does not have disconnected subintervals since any adjacent occurrences of a pattern $\pi$ will be offset by at most two positions. (That case will be discussed in detail below when studying the permutations with $|x(\tau)|=|\tau|-2$.) If the peaks of $\tau$ are decreasing, then the pattern $\subpermtau{i}{i+k-1}$ is the same 
for every even $i$: it starts with the largest entry $k$, followed by the pattern determined by the first $k-1$ entries of $x(\tau)$. It follows that adjacent occurrences of $\pi$ are offset by 2 and not by 3, which is a contradiction.
\end{proof}

\begin{example}\label{exa:ext_discon}
Consider the interval $\ist = [1, 68372514]$.  We see that $x(\tau)=2413$, so by Proposition~\ref{prop:xtautau}, we only need to look at $[2413,68372514]$ when searching for disconnected intervals in $\ist$.   Using Corollary~\ref{cor:disconnected} and its notation, the only non-trivial candidates for $\pi$ are 2413, 35241 and 52413.  In each case, the adjacent occurrences of $\pi$ in $\tau$ are offset by just two positions, so $\ist$ has no disconnected subintervals.  To use Corollary~\ref{cor:disconnected} without applying Proposition~\ref{prop:xtautau}, we would have had eight non-trivial candidates for $\pi$ to check.  
\end{example}

The condition $|x(\tau)|\neq2$ is used in the proof of Proposition~\ref{prop:xtautau} to argue that all adjacent occurrences of $x(\tau)$ are offset by two positions. To see an example of how the statement may fail when $|x(\tau)|=2$, take $\tau=1325746$. Its exterior is $x(\tau)=12$, but the only disconnected subinterval in $[1,\tau]$ is $[21,21453]$, which is not contained in $[x(\tau),\tau]$.

An immediate consequence of Proposition~\ref{prop:xtautau} is that $[1,\tau]$ has no (non-trivial) disconnected subintervals if $|\tau|- |x(\tau)|\leq 2$. In fact, we can precisely describe the structure of any interval $[\sigma,\tau]$ where $\tau$ has this property, as follows.

When $|\tau|- |x(\tau)|=1$, Lemma~\ref{lem:monotone} implies that $\tau$ is monotone, and in this case $[\sigma,\tau]$ is always a chain. 

When $|\tau|- |x(\tau)|=2$, we have that $x(\tau)=\subpermtau{1}{n-2}=\subpermtau{3}{n}$, where $n=|\tau|$. It follows that 
\begin{equation}\label{eq:shift2}\subpermtau{i}{j}=\subpermtau{i+2}{j+2} \qquad \mbox{for every }1\le i\le j\le n-2.\end{equation}
Thus, for every $2\le k\le n-1$, there are exactly two patterns of length $k$ contained in $\tau$, namely $\subpermtau{1}{k}$ and $\subpermtau{2}{k+1}$. These patterns are different from each other, because otherwise $\tau$ would be monotone, implying that $|\tau|-|x(\tau)|=1$.  For any $\sigma\le\tau$, the interval $[\sigma,\tau]$ has two elements of each length $k$ with $|\sigma|<k<|\tau|$. Additionally, any two elements in $[\sigma,\tau]$ of different lengths are comparable.
The Hasse diagram of such an interval is drawn in Figure~\ref{fig:criss-cross}.  Note that the two elements of length $n-2$ are $x(\tau)$ and $i(\tau)$.
 \begin{figure}[htbp]
\begin{center}
\begin{tikzpicture}[scale=0.8]
\tikzstyle{every node}=[circle, inner sep=1.5pt];
\begin{scope}
\draw (1,0) node[draw] (10) {};
\draw (1,-2ex) node {$\sigma$};
\draw (0,1) node[draw] (01) {};
\draw (2,1) node[draw] (21) {};
\draw (01) -- (10) -- (21);
\draw (01) -- (0,1.4);
\draw (01) -- (0.4,1.2);
\draw (21) -- (2,1.4);
\draw (21) -- (1.6,1.2);
\draw [line width=0.5pt, line cap=round, dash pattern=on 0pt off 3pt] (0,1.4) -- (0,2.2);\draw[densely dotted];
\draw [line width=0.5pt, line cap=round, dash pattern=on 0pt off 3pt] (2,1.4) -- (2,2.2);\draw[densely dotted];
\draw [line width=0.5pt, line cap=round, dash pattern=on 0pt off 3pt] (1,1.4) -- (1,2.2);\draw[densely dotted];
\end{scope}
\begin{scope}[yshift=18ex]
\draw (0,0) node[draw] (00) {};
\draw (2,0) node[draw] (20) {};
\draw (0,1) node[draw] (01upper) {};
\draw (-0.6,1) node {$x(\tau)$};
\draw (2,1) node[draw] (21upper) {};
\draw (2.6,1) node {$i(\tau)$};
\draw (0,2) node[draw] (02) {};
\draw (2,2) node[draw] (22) {};
\draw (1,3) node[draw] (13) {};
\draw(1,3.3) node {$\tau$};
\draw (00) -- (01upper) -- (02) -- (13) -- (22) -- (21upper) -- (20) -- (01upper) -- (22);
\draw (02) -- (21upper) -- (00);
\draw (00) -- (0,-0.4);
\draw (00) -- (0.4,-0.2);
\draw (20) -- (2,-0.4);
\draw (20) -- (1.6,-0.2);
\end{scope}
\end{tikzpicture}
\caption{The interval $\ist$ in the case $|\tau|- |x(\tau)|=2$.}
\label{fig:criss-cross}
\end{center}
\end{figure}

We conclude this section with a result that states that, in a certain precise sense, almost all intervals in $\bigP$ contain disconnected subintervals.  The approach below follows that in \cite{McSt15}, where the analogous result is shown for the classical pattern poset.

\begin{theorem}\label{thm:aadiscon}
Given a permutation $\sigma$, let 
\[
\discprob{n}
\]
denote the probability that $\ist$ contains a non-trivial disconnected subinterval, where $\tau$ is chosen uniformly at random from $\S_n$.  Then
\[
\lim_{n\to\infty} \discprob{n} = 1.
\]
\end{theorem}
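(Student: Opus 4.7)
The plan is to apply Corollary~\ref{cor:disconnected}: it suffices to show that, with probability tending to $1$, there exist $\pi$ with $\sigma\leq\pi\leq\tau$ having two adjacent occurrences in $\tau$ offset by at least $3$. My approach is to fix a single ``witness'' permutation $\rho$ (depending only on $\sigma$) that already contains such a configuration, and then argue that a random $\tau\in\S_n$ contains $\rho$ as a consecutive pattern with probability tending to $1$.

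First I would construct the witness. For $|\sigma|\geq 2$, I would take $\rho$ of length exactly $|\sigma|+3$ with $\sigma$ straddling $\rho$, so that Theorem~\ref{thm:disconnected} guarantees $[\sigma,\rho]$ itself is disconnected and we may take $\pi=\sigma$. Such a $\rho$ exists by direct construction: place $\sigma$ at positions $1,\ldots,|\sigma|$ and $4,\ldots,|\sigma|+3$ and choose the three middle values so as to create a descent that kills any intermediate occurrence of $\sigma$; for example, $\rho=134256$ works for $\sigma=123$, and $\rho=14325$ for $\sigma=12$. A short case analysis handles the remaining small and monotone cases. For $|\sigma|=1$, where straddling is vacuous, I would instead take $\rho=14325$ with $\pi=12\geq 1=\sigma$, whose only occurrences in $\rho$ are at positions $1$ and $4$, adjacent and offset by $3$.

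Next, for the probabilistic step, I would partition $\{1,\ldots,n\}$ into $m=\lfloor n/|\rho|\rfloor$ disjoint consecutive blocks of length $|\rho|$. A standard counting argument shows that the reductions of these disjoint blocks are independent and uniform on $\S_{|\rho|}$: for any prescribed tuple of block-reductions there are exactly $n!/(|\rho|!)^m$ permutations $\tau$ realizing it (choose the $|\rho|$-element subset of values assigned to each block, whose ordering is then fixed by the reduction, and freely order the remaining $n-m|\rho|$ positions). Hence the probability that no block reduces to $\rho$ equals $(1-1/|\rho|!)^m$, which tends to $0$ as $n\to\infty$.

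Finally I would verify the lifting step: if a block of $\tau$ at positions $a,\ldots,a+|\rho|-1$ reduces to $\rho$, the two adjacent occurrences of $\pi$ inside $\rho$ at positions $p_1<p_2$ yield occurrences of $\pi$ in $\tau$ at positions $a+p_1-1$ and $a+p_2-1$, still offset by $p_2-p_1\geq 3$. They remain adjacent in $\tau$ because $p_2+|\pi|-1\leq|\rho|$ forces any putative intermediate occurrence of $\pi$ to lie entirely within the embedded block, contradicting adjacency inside $\rho$. Corollary~\ref{cor:disconnected} then furnishes the required non-trivial disconnected subinterval, completing the proof. The main technical obstacle I foresee is the uniform construction of $\rho$, which involves some unavoidable case analysis for short and monotone $\sigma$; the probabilistic and lifting steps are routine.
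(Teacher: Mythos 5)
Your overall strategy matches the paper's: fix a witness permutation $\rho$ depending only on $\sigma$ that already contains a straddling configuration, then show a random $\tau\in\S_n$ contains $\rho$ with probability tending to $1$ (your block argument is exactly the paper's Lemma~\ref{lem:sigletau}, which you could have cited). However, your witness construction has a genuine gap. You insist on $|\rho|=|\sigma|+3$ with $\sigma$ straddling $\rho$, which forces $\sigma$ to occur at positions $1,\ldots,k$ and $4,\ldots,k+3$ where $k=|\sigma|$. For $k\geq 4$ these two occurrences overlap in positions $4,\ldots,k$, and consistency forces $\subperm{\sigma}{4}{k}=\subperm{\sigma}{1}{k-3}$, i.e.\ $\sigma$ must have a bifix of length $k-3$. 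This fails for many permutations that are neither small nor monotone. For instance, take $\sigma=21435$: a length-$8$ witness would need $\rho_4<\rho_5$ (from $\subperm{\rho}{1}{5}=21435$, since $\sigma_4<\sigma_5$) and simultaneously $\rho_4>\rho_5$ (from $\subperm{\rho}{4}{8}=21435$, since $\sigma_1>\sigma_2$), a contradiction; so no such $\rho$ exists and your ``short case analysis for small and monotone cases'' does not reach this. The paper avoids the issue by taking $\rho=\sigma\oplus\sigma$ when $\sigma_1>\sigma_k$ (and $\rho=\sigma\ominus\sigma$ when $\sigma_1<\sigma_k$), giving two disjoint occurrences of $\sigma$ offset by $k$; after reducing to $k\geq 3$ this always yields a straddling witness of rank $\geq 3$, with no bifix constraint. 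If you substitute this witness, your argument goes through.

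Two smaller remarks. First, your ``lifting step'' is unnecessary: once you know $\sigma\leq\pi\leq\rho\leq\tau$ with $\pi$ straddling $\rho$ and $|\rho|-|\pi|\geq 3$, the interval $[\pi,\rho]$ is already a non-trivial disconnected subinterval of $\ist$ by Theorem~\ref{thm:disconnected}, with no need to track occurrences inside $\tau$ or invoke the full characterization of Corollary~\ref{cor:disconnected}. Second, your handling of $\sigma=1$ via $\pi=12$ and $\rho=14325$ is correct and slightly different from the paper, which instead reduces all of $|\sigma|\leq 2$ to the case $|\sigma|\geq 3$ at the outset; either route is fine.
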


Before proving Theorem~\ref{thm:aadiscon}, let us give a quick preliminary result which will also be useful in Subsection~\ref{sub:carrier}.

\begin{lemma}\label{lem:sigletau}
Given a permutation $\sigma$, let $\P_n(\sigma\leq \tau)$ denote the probability that $\sigma\leq \tau$, where $\tau$ is chosen uniformly at random from $\S_n$.  Then
\[
\lim_{n \to \infty} \P_n(\sigma\leq \tau) = 1.
\]
\end{lemma}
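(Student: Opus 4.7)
The plan is to argue that $\sigma$ must fail to occur inside each of a growing number of disjoint consecutive windows, which happens with probability tending to zero. Fix $k=|\sigma|$ and let $m=\lfloor n/k\rfloor$. Partition the positions $[n]$ into the consecutive blocks $B_i=\{(i-1)k+1,\ldots,ik\}$ for $i=1,\ldots,m$, together with a (possibly empty) leftover block of size $n-mk$ that we discard. If $\red(\tau_{(i-1)k+1}\cdots\tau_{ik})=\sigma$ for some $i$, then $\sigma\leq\tau$, so it suffices to show that the probability that $\sigma$ is not the reduction of any of these $m$ windows tends to $0$.

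The key ingredient is a standard block-independence property: if $\tau$ is chosen uniformly from $\S_n$ and $B_1,\ldots,B_m$ are pairwise disjoint subsets of $[n]$ of sizes $s_1,\ldots,s_m$, then the reductions $\red(\tau|_{B_1}),\ldots,\red(\tau|_{B_m})$ are mutually independent, with $\red(\tau|_{B_i})$ uniform on $\S_{s_i}$. This follows from a direct count: for any target tuple $(\pi_1,\ldots,\pi_m)\in\S_{s_1}\times\cdots\times\S_{s_m}$, one first chooses which values of $[n]$ occupy each $B_i$ (a multinomial count), arranges them within $B_i$ in the unique order dictated by $\pi_i$, and distributes the remaining $n-\sum s_i$ values freely in the leftover positions. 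The resulting count $n!/(s_1!\cdots s_m!)$ is independent of the $\pi_i$, proving both uniformity and independence.

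Applying this with all $s_i=k$, the $m$ events $E_i=\{\red(\tau|_{B_i})=\sigma\}$ are independent, each with probability $1/k!$. Hence
\[
\P_n(\sigma\not\leq\tau)\leq\P_n\Bigl(\bigcap_{i=1}^m E_i^c\Bigr)=\left(1-\frac{1}{k!}\right)^{m},
\]
and since $k$ is fixed while $m\to\infty$, this upper bound tends to $0$, giving $\P_n(\sigma\leq\tau)\to 1$.

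I do not anticipate any substantive obstacle: once block-independence is invoked, the conclusion is a one-line geometric bound. The only piece of care needed is in explicitly stating (or citing) the independence lemma, since independence of pattern occurrences generally fails for overlapping windows and is only available here because the $B_i$ are disjoint.
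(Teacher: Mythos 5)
Your proof is correct and takes essentially the same approach as the paper: partition the first $k\lfloor n/k\rfloor$ positions into disjoint consecutive blocks of length $k$, observe that each block independently matches $\sigma$ with probability $1/k!$, and conclude that the probability of avoidance is at most $(1-1/k!)^{\lfloor n/k\rfloor}\to 0$. The only difference is that you spell out the block-independence counting argument, which the paper leaves implicit.
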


\begin{proof}
Let $k = |\sig|$.  Let us break the first $k\lfloor n/k \rfloor$ entries of $\tau$ into $\lfloor n/k \rfloor$ disjoint blocks of length $k$, starting at the beginning.  The probability that none of these blocks is an occurrence of $\sigma$ is 
\[
\left(1-\frac{1}{k!}\right)^{\lfloor n/k \rfloor}.
\]
Since $k$ is fixed, this probability clearly goes to 0 as $n \to \infty$, implying the desired result.\end{proof}

\begin{proof}[\textbf{\bf Proof of Theorem~\ref{thm:aadiscon}}]
Letting $k = |\sig|$, we can assume that $k \geq 3$, since establishing the result for such permutations will also clearly prove it for all $\sigma$.  Assume first that $\sigma_1 > \sigma_k$.  In this case, we see that $\sigma$ straddles $\sigma\oplus \sigma$ and so, by Theorem~\ref{thm:disconnected}, $[\sigma, \sigma\oplus\sig]$ is disconnected.  Applying Lemma~\ref{lem:sigletau} tells us that $\tau$ contains $\sigma\oplus\sigma$ almost surely as $n\to \infty$.  We conclude that as $n \to \infty$, the interval $\ist$ contains the disconnected interval $[\sigma,\sigma\oplus\sig]$ with probability approaching 1, as required.

If $\sigma_1 < \sigma_k$ , then $\sigma$ straddles $\sigma\ominus\sigma$ and we proceed similarly.
\end{proof}

In contrast with Theorem~\ref{thm:aadiscon}, we note that there are infinite classes of intervals that do not contain disconnected subintervals.  For a simple example, any time $\sigma$ satisfies $\sig_1 > \sig_{|\sig|}$, we can take monotone increasing sequences $\alpha$ and $\beta$ and let $\tau =\alpha \oplus \sigma\oplus \beta$.  Then any element of $\ist$ will occur only once in $\tau$, so there are no disconnected subintervals.  For examples of intervals $\ist$ that are not just a product of two chains but contain no disconnected subintervals, let $\sigma= 21$ and $\tau = \alpha \oplus 21\oplus 21\oplus \cdots \oplus 21 \oplus \beta$, where the number of copies of 21 is at least 2 and where $\alpha$ and $\beta$ are monotone increasing as before.

\section{Shellability}\label{sec:shellability}

Our goal for this section is to prove that all intervals in $\bigP$ are shellable, except for those which are not shellable for a straightforward reason.  We begin by giving the necessary background on shellability.  We refer the interested reader to \cite{Wac07} for further details and a wealth of other information about poset topology.

\subsection{Background on shellability and CL-shellability}\label{sub:clbackground}
When considering a combinatorially defined simplicial complex, such as the order complex $\Delta(\sigma,\tau)$ of an interval $\ist$ of $\bigP$, it is common to ask if the simplicial complex is shellable.  Since our intervals $\ist$ are graded, we can restrict our discussion to order complexes that are pure, i.e., all the facets have the same dimension.   A \emph{shelling} of a pure $d$-dimensional simplicial complex $\Delta$ is a linear ordering $F_1, F_2, \ldots,F_s$ of its facets such that the intersection 
\begin{equation}\label{equ:shelling}
\left( \bigcup_{j=1}^{k-1} F_j\right) \cap F_k
\end{equation}
is pure and $(d-1)$-dimensional for all $k=2,3,\ldots,s$.  A simplicial complex is \emph{shellable} if it has a shelling.  Figure~\ref{fig:ordercomplex} is an example of a non-shellable complex since in any ordering of the facets, the point 213 will be a $(d-2)$-dimensional connected component of an intersection of the type in \eqref{equ:shelling}, implying that no shelling exists.  As is customary, we will say that $\ist$ is \emph{shellable} if $\Delta(\sigma,\tau)$ is.  

Much of the the interest in shellability arises from the fact that if an interval $\ist$ is shellable, then this tells us that the homotopy type of $\Delta(\sigma,\tau)$ is a wedge of spheres of the top dimension.  The number of spheres is $|\mu(\sigma,\tau)|$ which, in the case of intervals in $\bigP$, can be determined from Theorem~\ref{thm:mobius}.  In the shellable case, we deduce that either the number of spheres is one or $\Delta(\sigma,\tau)$ is contractible.  

These topological considerations are also why we take the permutation 1 as the bottom element of $\bigP$, rather than the empty permutation $\emptyset$: since $\emptyset$ is only covered by the permutation 1, any complex of the form $\Delta(\emptyset,\pi)$ will be contractible.

We will prove shellability using Bj\"orner and Wachs's theory of CL-shellability, which is a generalization\footnote{Actually, it remains open whether there exist posets that are CL-shellable but not EL-shellable.  The point of CL-shellability is that it is more flexible than EL-shellability, making CL-shellings sometimes easier to find.} of Bj\"orner's theory of EL-shellability \cite{Bjo80}.  The original definition of CL-labeling was given in \cite{BjWa82} under the name ``L-labeling''; the now customary definition below appears in \cite{BjWa83}.  We will partially follow \cite{Wac07} in the exposition here. 

A poset $Q$ is said to be \emph{bounded} if it has a unique minimal and a unique maximal element, which we denote $\hat{0}$ and $\hat{1}$ respectively.  When $x$ is covered by $y$, we will write the corresponding edge as $x \covby y$\,; the head of the arrow reflects the $<$ symbol, and this notation will seem natural later when we primarily follow chains from top to bottom.  Rather than just labeling edges $x \covby y$ as is the case for EL-labelings, we consider the set $\mathcal{ME}(Q)$ of pairs $(C, x \covby y)$ consisting of a maximal chain $C$ and an edge along that chain.  A \emph{chain-edge labeling} of $Q$ is a map $\lambda:\mathcal{ME}(Q) \to \Lambda$, where $\Lambda$ is some poset, satisfying the following condition: if two maximal chains coincide along their bottom $k$ edges, then their labels also coincide along these edges.  The key point is that the label on an edge depends on the maximal chain along which we arrive at that edge.  See Figure~\ref{fig:cl} for an example.

It follows that to restrict a chain-edge labeling of $Q$ to an interval $[u,v]$ in $Q$, we need to record how we arrived at the bottom element $u$ of the interval.  Therefore, we introduce the idea of a \emph{bottom-rooted interval} $[u,v]_r$, which is the interval $[u,v]$ together with a maximal chain $r$ of $[\hat{0}, u]$.  This rooting allows the restriction of $\lambda$ to an interval $[u,v]$ to be consistent with $\lambda$ defined over all of $Q$.  More precisely, for an edge $x \covby y$ in $[u,v]_r$ the label received by $x \covby y$ when considered as being along the maximal chain $C$ of $[u,v]_r$ is equal to the label received by $x \covby y$ when considered as begin along the maximal chain $r \cup C \cup s$ of $Q$, where $s$ is any maximal chain of $[v,\hat{1}]$.

When we say that a maximal chain $M$ of $[u,v]_r$ is \emph{weakly increasing} or \emph{lexicographically precedes} another, we are referring to the sequence of labels along $M$ as we read from $u$ up to~$v$. 

\begin{definition}
Let $Q$ be a bounded poset.  A chain-edge labeling $\lambda$ is a \emph{CL-labeling} (chain-lexicographic labeling) if in each bottom-rooted interval $[u,v]_r$ of $Q$, there is a unique weakly increasing maximal chain, and this chain lexicographically precedes all other maximal chains in $[u,v]_r$\,.  A poset which has a CL-labeling is said to be \emph{CL-shellable}.
\end{definition}

\begin{figure}
\begin{center}
\begin{tikzpicture}[scale=0.55]
\tikzstyle{every node}=[circle, inner sep=0pt];
\draw (2,0) node[draw,minimum size=3ex] (a) {$a$};
\draw (0,2) node[draw,minimum size=3ex] (b) {$b$};
\draw (4,2) node[draw,minimum size=3ex] (c) {$c$};
\draw (0,5) node[draw,minimum size=3ex] (d) {$d$};
\draw (4,5) node[draw,minimum size=3ex] (e) {$e$};
\draw (2,7) node[draw,minimum size=3ex] (f) {$f$};
\tikzstyle{every node}=[text=blue];
\draw (a) --node[left=1pt]{$\textbf{1}$} (b) --node[left=-2pt]{\textbf{2}} (d) --node[left=1.5pt,pos=0.7]{\textbf{2}} (f);
\tikzstyle{every node}=[text=black];
\draw (f) --node[right=1.5pt]{\textbf{2}} (e) -- node[right=2pt, very near end]{\textbf{3}} (b);
\draw (e) --node[right=-1pt]{\textbf{1}} (c);
\tikzstyle{every node}=[text=red];
\draw (a) --node[right=1.5pt]{\textbf{3}} (c) -- node[right=2.5pt, very near end]{\textbf{2}} (d); 
\draw (1.3,5.9) node {\textcolor{red}{$\textbf{1}$}};
\end{tikzpicture}
\caption{A chain-edge labeling that is a CL-labeling.  Notice that the edge $d\covby f$ receives two possible labels: the label 2 as an element of the chain $a \covby b \covby d \covby f$, and the label 1 as an element of the chain $a \covby c \covby d \covby f$.  The bottom-rooted interval $[b,f]_{a \covby b}$ has a unique increasing chain, namely $b \covby d \covby f$ which has label sequence $(2,2)$.}
\label{fig:cl} 
\end{center}
\end{figure}
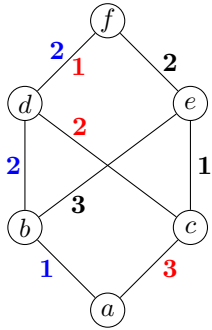

An example of a CL-labeling is shown in Figure~\ref{fig:cl}.    As the terminology suggests, a bounded poset that is CL-shellable is shellable \cite[Theorem~3.3]{BjWa82} and, along with EL-shellability, exhibiting CL-shellability is one of the most common ways to prove shellability of order complexes.

\subsection{Shellability in the consecutive pattern poset}

It is easy to see that disconnected intervals of rank at least 3 are not shellable.   Moreover, a result of Bj\"orner \cite{Bjo80} states that if a poset is shellable, then all of its subintervals are shellable.  In particular, if an interval $\ist$ contains a disconnected subinterval of rank at least 3, then it is certainly not shellable. As an example, the disconnected subinterval $[213, 213546]$ in Figure~\ref{fig:12-213546} results in the non-shellability of the corresponding order complex in Figure~\ref{fig:ordercomplex}.

It thus follows immediately from Theorem~\ref{thm:aadiscon} that almost all intervals in $\bigP$ are not shellable, in a particular precise sense.

\begin{corollary}\label{cor:nonshellable}
Given a permutation $\sigma$, let $\tau$ be chosen uniformly at random from $\S_n$.  Then the probability that $[\sigma,\tau]$ is shellable tends to 0 as $n$ tends to infinity.
\end{corollary}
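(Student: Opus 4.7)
The plan is to derive the corollary immediately from Theorem~\ref{thm:aadiscon} together with the two facts recalled in the paragraph preceding the statement: a disconnected interval of rank at least~$3$ is not shellable (since a shellable simplicial complex of dimension $\geq 1$ is necessarily connected), and by Bj\"orner's result any subinterval of a shellable poset is itself shellable, so the presence of any non-shellable subinterval forces non-shellability of the ambient interval.

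Concretely, fix $\sigma$ and let $\tau$ be chosen uniformly from $\S_n$. By Theorem~\ref{thm:aadiscon}, the event ``$\ist$ contains a non-trivial disconnected subinterval'' has probability tending to $1$ as $n\to\infty$. On this event, $\ist$ contains a subinterval of rank at least $3$ whose open part is disconnected; such a subinterval is not shellable, and therefore $\ist$ itself is not shellable by Bj\"orner's hereditary property of shellability. Consequently the probability that $\ist$ is shellable is bounded above by the probability that $\ist$ contains no non-trivial disconnected subinterval, which tends to $0$.

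There is essentially no obstacle here, since both ingredients have already been assembled: the quantitative input is Theorem~\ref{thm:aadiscon}, and the structural input is the standard observation that non-trivial disconnectedness obstructs shellability and is inherited upward to any larger interval. The only minor point worth flagging explicitly in the write-up is why a disconnected order complex of dimension $\geq 1$ cannot be shellable, which is immediate from the shelling condition~\eqref{equ:shelling}: if $F_1,\ldots,F_s$ is a shelling and $d\geq 1$, then each intersection $\bigl(\bigcup_{j<k}F_j\bigr)\cap F_k$ is pure of dimension $d-1\geq 0$ and hence non-empty, forcing the union $\bigcup_j F_j$ to be connected. With that remark in place, the proof is a two-line citation of the preceding results.
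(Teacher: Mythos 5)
Your argument is correct and is precisely the route the paper takes: cite Theorem~\ref{thm:aadiscon} for the probabilistic input, and combine it with the two facts stated just before the corollary (a non-trivial disconnected interval is not shellable, and shellability is inherited by subintervals by Bj\"orner's result). The paper treats the corollary as an immediate consequence and does not even write out a separate proof, so your two-line citation matches its level of detail; the extra remark you include explaining via~\eqref{equ:shelling} why a disconnected complex of dimension $\geq 1$ cannot be shellable is a harmless and correct elaboration of the paper's ``it is easy to see'' clause.
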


Our main result of this section is that the converse of Bj\"orner's result is true in $\bigP$: if an interval $\ist$ does not contain a disconnected subinterval, then it is shellable.  So, roughly speaking, all intervals in the consecutive pattern poset that have any hope of being shellable are in fact shellable.  Recall that we have already classified those intervals that contain disconnected subintervals in Corollary~\ref{cor:disconnected} in terms of the entries of $\sigma$ and $\tau$, so the theorem we are about to state classifies those $\sigma$ and $\tau$ that yield a shellable interval $\ist$.

\begin{theorem}\label{thm:shellable}
The interval $\ist$ in $\bigP$ is shellable if and only if it contains no non-trivial disconnected subintervals.
\end{theorem}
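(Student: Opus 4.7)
Plan:

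The ``only if'' direction is short. A disconnected open interval of rank at least $3$ has an order complex that is disconnected and of positive dimension, so it cannot be shellable: in any candidate shelling, the facet at which we first jump to a new connected component forces the intersection in \eqref{equ:shelling} to be empty, violating purity of codimension~$1$. Combined with Bj\"orner's classical result that subintervals of shellable posets are themselves shellable \cite{Bjo80}, this means that no interval containing a non-trivial disconnected subinterval can be shellable.

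For the converse, assume $\ist$ contains no non-trivial disconnected subinterval. By Corollary~\ref{cor:disconnected}, any two adjacent occurrences of any $\pi\in\ist$ in $\tau$ are offset by at most two positions. I would construct a CL-labeling of $\ist$ that exploits the fact that each cover $\pi'\covby\pi$ in $\bigP$ is either an \emph{L-cover} (where $\pi'=\subpermtau{2}{|\pi|}$) or an \emph{R-cover} (where $\pi'=\subpermtau{1}{|\pi|-1}$), and that these two types coincide exactly when $\pi$ is monotone (Lemma~\ref{lem:monotone}). The labels would be pairs $(s,p)$, where $s\in\{L,R\}$ encodes the cover type and $p$ records enough positional information (for instance, the position in $\tau$ of the cut entry, tracked through a canonical window sequence associated to the chain) to distinguish the two mirror-image chains of a criss-cross subinterval (Figure~\ref{fig:criss-cross}). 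On a monotone $\pi$, where L and R agree, the label would be forced to depend on the chain in order for the chain-edge labeling consistency to hold.

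The heart of the proof is the claim that in every bottom-rooted subinterval $[u,v]_r$, there is a unique weakly increasing chain: intuitively, the chain that performs all L-extensions from $u$ and then all R-extensions up to $v$, threading through the canonical sequence of occurrences in $\tau$ determined by the root. The no-disconnected-subinterval hypothesis enters precisely here, because by Theorem~\ref{thm:disconnected} (and Corollary~\ref{cor:disconnected}) a non-trivial disconnected subinterval is exactly the obstruction that would produce two disjoint chains going through completely L-style and completely R-style extensions of $u$; the offset-at-most-two hypothesis on pattern occurrences forbids this situation, while still allowing monotone runs (offset $1$) and criss-cross pieces (offset $2$), both of which are small enough to be handled directly.

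The main obstacle I expect is verifying the CL-property simultaneously across \emph{all} bottom-rooted subintervals. The delicate case is when a rooted subinterval looks locally like the criss-cross of Figure~\ref{fig:criss-cross}: there the naive L/R labeling would make both sides of the diamond weakly increasing, so the positional refinement in the labels must break the tie in exactly one direction and must continue to do so compatibly for every longer rooted interval that contains this criss-cross. Getting this consistency right, particularly when the chain above dips through monotone permutations at which L and R collapse, requires an inductive definition of the labeling on the rank of the upper endpoint, and carefully tracking, along each chain, which occurrence of each pattern in $\tau$ is being used. I expect that this bookkeeping, rather than any single global identity, is what will constitute the bulk of the technical work.
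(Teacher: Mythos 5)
Your outline matches the paper's strategy at the level of the plan: the ``only if'' direction via Bj\"orner's inheritance of shellability by subintervals, and the ``if'' direction by constructing a dual CL-labeling that records whether each step deletes a leftmost or a rightmost entry, with the criss-cross rank-$2$ intervals correctly flagged as the place where a naive left/right labeling fails and a tie has to be broken. You also correctly identify that the no-disconnected-subinterval hypothesis, via Corollary~\ref{cor:disconnected}, is exactly what forbids an ``all-left'' and an ``all-right'' chain from becoming disjoint in some rooted subinterval.

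The gap is that the labeling is never actually produced, and the scheme you gesture at --- pairs $(s,p)$ with $p$ a position in $\tau$ carried along a ``canonical window sequence'' --- is both underspecified and more cumbersome than necessary; in particular, it is not evident it would satisfy the bottom-$k$-edges consistency condition that chain-edge labelings require. The paper's device is cleaner and worth internalizing: label each left-deletion $0$ and each right-deletion $1$, then scan each maximal chain from the top and, at every rank-$2$ triple $\pi''\covers\pi'\covers\pi$ on that chain with $\pi$ straddling $\pi''$ and labels not both $0$, lower the label on $\pi'\covers\pi$ to sit $\eps$ below that on $\pi''\covers\pi'$. This single local perturbation kills one side of every criss-cross, leaves the canonical chain (delete from the left of $\bet$ until reaching the rightmost occurrence of $\alp$, then from the right) with the increasing label sequence $0,\dots,0,1,\dots,1$, and lets the hypothesis be invoked through a short case analysis on the offset between the terminal occurrence of a competing increasing chain and the rightmost occurrence of $\alp$: offset $1$ gives a monotone permutation and a contradiction with the label convention, offset $2$ gives a straddling triple whose $\eps$-perturbed labels strictly decrease, and offset $\geq 3$ produces a disconnected subinterval, contradicting the hypothesis. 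As written, your proposal anticipates all three phenomena but carries out neither the construction nor the verification, so it remains a plan rather than a proof.
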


It remains to prove the ``if'' direction, which we will do using CL-shellability.  Because we will read the labels on our maximal chains from top to bottom, we will actually show that the dual of $\ist$ is CL-shellable and hence shellable; this implies the shellability of $\ist$ since the order complex of $\ist$ is clearly isomorphic to that of its dual.  The analogue of Theorem~\ref{thm:shellable} in the classical pattern poset is actually false ($[123, 3416725]$ is given in \cite{McSt15} as a counterexample), but we should note that some of the ideas of our proof are similar to those in \cite{McSt15} for proving the dual CL-shellability in the classical case of intervals of layered permutations without non-trivial disconnected subintervals.

Before giving the proof, we will define the chain-edge labeling that we will use.  We will first set provisional labels using a first pass, and then make some modifications in a second pass.  If $\pi'$ has length $\ell$ and covers $\pi$ along some maximal chain $C$ of $\ist$ and $\pi = \subperm{\pi'}{2}{\ell}$, then we label the edge $\pi' \covers \pi$ by $0$.  Otherwise, we assign the label 1.  Referring to Lemma~\ref{lem:monotone}, note that we assign the label 0 whenever $\pi'$ is monotone.  However, the resulting labeling, which is actually an edge-labeling (i.e., each edge label is independent of the chain $C$ on which the edge appears), is not a dual CL-labeling.  For example, the disconnected rank-2 intervals $[21,2143]$ and $[213, 21435]$ each have two increasing chains from top to bottom, one with labels (0,0) and the other with labels (1,1).  Here and elsewhere, we say ``increasing''  to mean weakly increasing.  We next describe the modification we make to our labeling to address this shortcoming.

Pick $\eps > 0$ to be small; $\eps \leq 1/(|\tau|-|\sig|)$ will suffice.  For every maximal chain $C$, work along $C$ from top to bottom, pausing at any triple $\pi'' \covers \pi' \covers \pi$ such that $\pi$ straddles $\pi''$, as in the examples of the previous paragraph.  We see that this situation of straddling for rank-2 intervals is exactly the condition that results in chains with labels (0,0) and (1,1).  Do nothing to the two labels that are already 0.  For the other chain,
letting $\lab{C}{\pi'}{\pi}$ denote the label of the edge $\pi' \to \pi$ along $C$,
decrease the label on $\pi'\covers \pi$ so that $\lab{C}{\pi'}{\pi} = \lab{C}{\pi''}{\pi'} - \eps$.  After completion of these modifications, every label will either be 0 or $1-k\eps$ for some $k \geq 0$.
See Figure~\ref{fig:labeling} for an example.  The edge $213 \covers 21$ there shows that this labeling is a chain-edge labeling rather than just an edge labeling.

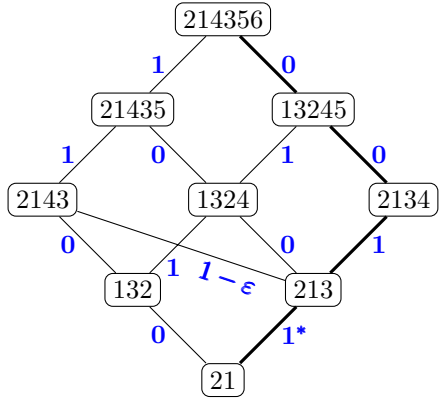
\begin{figure}[htbp]
\begin{center}
\begin{tikzpicture}[scale=1.2]
\tikzstyle{every node}=[rectangle, rounded corners=3pt, inner sep=3pt];
\draw (2,0) node[draw] (21) {21};
\draw (3,1) node[draw] (213) {213};
\draw (1,1) node[draw] (132) {132};
\draw (4,2) node[draw] (2134) {2134};
\draw (2,2) node[draw] (1324) {1324};
\draw (0,2) node[draw] (2143) {2143};
\draw (3,3) node[draw] (13245) {13245};
\draw (1,3) node[draw] (21435) {21435};
\draw (2,4) node[draw] (214356) {214356};
\tikzstyle{every node}=[text=blue];
\draw[very thick] (21) --node[right=1pt]{$\mathbf{1}^{\pmb{*}}$} (213) --node[right=1pt]{\textbf{1}} (2134) --node[right=1pt]{\textbf{0}} (13245) --node[right=1pt]{\textbf{0}} (214356);
\draw (1.45,1.25) node{\textbf{1}};
\draw (21) --node[left=1pt]{\textbf{0}} (132)
(213) --node[right=1pt]{\textbf{0}} (1324) -- (132) --node[left=1pt] {\textbf{0}} (2143) --node[left=1pt]{\textbf{1}} (21435) --node[left=1pt]{\textbf{0}} (1324) --node[right=1pt]{\textbf{1}} (13245)
(21435) --node[left=1pt]{\textbf{1}} (214356)
(2143) --node[sloped, below=-1pt, near end]{$\pmb{1-\eps}$} (213);
\end{tikzpicture}

\caption{The interval $[21,214356]$ with its increasing chain shown in bold. The top-rooted intervals $[213,21435]_{214356 \rightarrow 21435}$ and $[21,2143]_{214356 \rightarrow 21435 \rightarrow 2143}$ have had their labels modified in the particular way we describe.  The edge labeled $1^*$ takes the label $1-2\eps$ along the chain that arrives via 2143 and otherwise takes the label $1$.  All other labels are independent of the chain on which they appear.}
\label{fig:labeling}
\end{center}
\end{figure}

\begin{proof}[\textbf{\bf Proof of Theorem~\ref{thm:shellable}}]
As already mentioned in the first paragraph of this subsection, the interval $\ist$ is not shellable if it contains a non-trivial disconnected subinterval.  For the converse, we will prove dual CL-shellability. So suppose $\ist$ contains no non-trivial disconnected subintervals, and let $[\alp, \bet]_r$ be a \emph{top-rooted interval} in $\ist$, the obvious analogue of bottom-rooted interval.  Using the labeling $\lambda_C$ defined above and reading labels along chains from top to bottom, we wish to show that there is a unique increasing maximal chain in $[\alp,\bet]_r$ and that this increasing chain has the lexicographically first labels of all maximal chains in $[\alp,\bet]_r$\,. 

Every maximal chain $D$ from $\bet$ to $\alp$ can be said to ``end'' at a particular occurrence of $\alp$ in $\bet$ as follows.  Each edge of $D$ corresponds to deleting an entry from the left or from the right of $\bet$, with the convention that the outgoing edge of a monotone permutation corresponds to deleting from the left (this is consistent with the label 0 assigned in our definition of $\lambda_C$);  the entries not deleted from $\bet$ after traversing $D$ give an occurrence of $\alp$.

Define a particular maximal chain $D$ of $[\alp,\bet]_r$ in the following way: starting at $\bet$, delete the leftmost entry if the result will still be in $[\alp,\bet]_r$\,, and otherwise delete the rightmost entry.  In other words, $D$ will delete from the left of $\bet$ until it reaches the rightmost occurrence of $\alp$ in $\bet$, at which point it will switch to deleting from the right.  None of the resulting labels will be of the form $1-k\eps$ for $k>0$ since $D$ will never delete from the right when there is also the option of deleting from the left; see also the technical note in the next paragraph. Thus the sequence of labels will take the form $0, 0, \ldots, 0, 1, 1, \ldots, 1$ from top to bottom, possibly with no 0's or no 1's.  By definition of $D$, these labels are the lexicographically least of any maximal chain.  

It is necessary to make one technical note at this point.  One might wonder about whether the deletions along $r$ could cause any of the labels along $D$ to be of the form $1-k\eps$ with $k>0$.  Let us show that this never happens.  We see that the only possibility is that the first label within $[\alp,\bet]_r$ along $D$ is of this form.  By definition of $D$, this implies that $D$ always deletes from the right, and so $\alp$ only occurs in $\bet$ as a prefix.  Thus there is only one maximal chain $D$ in $[\alp,\bet]_r$ and it has label sequence $(1-k\eps,1,1,\ldots,1)$ from top to bottom,  which is increasing as required.  While this shows that labels of the form $1-k\eps$ with $k>0$ on $D$ do not cause any difficulty, it does not fulfill our promise to show that such labels never occur.  So
let $\bet^+$ be the element covering $\bet$ on $r$, and let $\bet^-$ be the element $\bet$ covers on $D$.  If $\bet \rightarrow\bet^-$ gets the label $1-k\eps$ with $k>0$, this means by definition of our label modifications that $\bet^- = \subperm{\bet^+}{1}{\ell-2} =  \subperm{\bet^+}{3}{\ell}$ where $\ell = |\bet^+|$.  Since $\alp$ only occurs in $\bet$ and hence $\bet^-$ as a prefix, the element covered by $\bet^-$ along $D$ is then $\subperm{\bet^+}{1}{\ell-3} = \subperm{\bet^+}{3}{\ell-1}$.  But $\subperm{\bet^+}{3}{\ell-1} = \subpermbet{3}{\ell-1}$, contradicting the fact that $\alp$ only occurs in $\bet$ as a prefix.  We conclude that all of the labels along $D$ are indeed 0 or 1.

Now suppose $[\alp,\bet]_r$ contains another increasing maximal chain $D'$.
Our approach will be to consider the possible locations of the occurrence of $\alp$ at which $D'$ ends, and to show some contradiction in each case.
If $D'$ ends at the rightmost occurrence $\subseqbet{i}{j}$ of $\alp$ in $\bet$, then $D'$ must equal $D$ to be increasing, since the path along $D'$ must make all its left deletions before making its right deletions.  Therefore, assume $D'$ ends at an occurrence of $\alp$ in $\bet$ that is not rightmost, namely $\subseqbet{i'}{j'}$. Since $D'$ is increasing, its last deletion is on the right and thus receives a label of the form $1-k\eps$ for $k\geq0$.  This already yields a contradiction in the case when $\sig=1$ since any permutation covering the permutation 1 is monotone.  Therefore, assume that $|\sig|>1$ for the rest of this proof.  Among those increasing maximal chains different from $D$, choose $D'$ so that $i'$ is as large as possible. There are three cases to consider.

First suppose $i'=i-1$ which, by Lemma~\ref{lem:monotone}, implies that $\subpermbet{i'}{j'+1}$ is monotone. Since the last deletion along $D'$ is on the right, the element covering $\alp$ on $D'$ is this monotone permutation $\subpermbet{i'}{j'+1}$.  But our convention for monotone permutations then implies that the last edge in $D'$ is labeled 0, which is a contradiction.

Next suppose that $i'=i-2$.  Thus $\subpermbet{i-2}{j-2} = \subpermbet{i}{j} = \alp$.  If it is also the case that $\subpermbet{i-1}{j-1} = \alp$, then applying Lemma~\ref{lem:monotone} three times tells us that $\subpermbet{i-2}{j}$ is monotone.  Since the last deletion of $D'$ is on the right, the element covering $\alp$ on $D'$ will be $\subpermbet{i-2}{j-1}$, obtaining a contradiction as in the previous paragraph.
Thus $\subpermbet{i-1}{j-1} \neq \alp$, and so $\alp$ straddles $\subpermbet{i-2}{j}$.  Since $D'$ ends at $\subpermbet{i-2}{j-2}$, the last two edges on $D'$ must receive the label $1-k\eps$ followed by the label $1-(k+1)\eps$ for some $k\geq0$.  This contradicts $D'$ having increasing labels.

For the third and final case, let $i' < i-2$.  Suppose first that, except for $\subseqbet{i}{j}$, the rightmost occurrence of $\alp$ in $\bet$ is  $\subseqbet{i'}{j'}$.  Then consider the subinterval $[\alp, \subpermbet{i'}{j}]$ of $[\alp,\bet]_r$.  Since $\alp$ straddles $\subpermbet{i'}{j}$, Theorem~\ref{thm:disconnected} implies that $[\alp,\bet]_r$ has a non-trivial disconnected subinterval, contradicting our hypothesis.  Thus there must be some other occurrence $\subseqbet{i''}{j''}$ of $\alp$ with $i' < i'' < i$ and, among all such occurrences, let us choose $i''$ to be as close to $i'$ as possible.  If $i''=i'+1$, then $\subpermbet{i'}{j'+1}$ is monotone, yielding a contradiction as before.  If $i''=i'+2$, then we can apply the argument of the previous paragraph with $i''$ and $j''$ in place of $i$ and $j$ to yield a contradiction.  Finally, suppose $i'' > i'+2$, and consider the subinterval $[\alp, \subpermbet{i'}{j''}]$ of $[\alp,\bet]_r$.  By our choice of $i''$, we see that $\alp$ straddles $\subpermbet{i'}{j''}$, yielding by Theorem~\ref{thm:disconnected} a non-trivial disconnected subinterval of $[\alp,\bet]_r$, contradicting our hypothesis.
\end{proof}

We conclude this section with a comparison of Theorem~\ref{thm:shellable} to related results in the literature.
\begin{remark} It follows from Theorem~\ref{thm:shellable} that the homotopy type of an interval $[\sigma,\tau]$ without non-trivial disconnected subintervals is a wedge of spheres.  As we mentioned in Subsection~\ref{sub:clbackground}, the number of spheres is $|\mu(\sigma,\tau)|$ so, by Theorem~\ref{thm:mobius}, we conclude that $\Delta(\sigma,\tau)$ is homotopic to a single sphere or is contractible.  This same conclusion is given as \cite[Theorem~2.8]{SaWi12} but they do not show shellability and instead use discrete Morse theory.  In fact, their conclusion applies to all intervals $[\sigma,\tau]$ in $\bigP$, even those containing disconnected subintervals.  An example of this more general setting is the 2-dimensional simplicial complex in Figure~\ref{fig:ordercomplex}, which contracts to a sphere of dimension 1.

It also follows from Theorem~\ref{thm:shellable} that the single spheres that arise must be of the same dimension as $\Delta(\sigma,\tau)$, but this is no longer true in the non-shellable cases, as shown by the aforementioned example in Figure~\ref{fig:ordercomplex}.  In summary, Theorem~\ref{thm:shellable} has a stronger hypothesis but also a stronger conclusion than \cite[Theorem~2.8]{SaWi12}.
\end{remark}

\begin{remark}
Billera and Myers \cite{BiMy98} have shown a general result of a similar flavor to Theorem~\ref{thm:shellable}: a bounded graded poset is shellable if it is $(2+2)$-free, meaning that it contains no induced subposet that is isomorphic to a disjoint sum of two chains of length 2.  In \cite{Wac99}, Wachs shows that such (2+2)-free posets are CL-shellable.  We note, however, that these results are not enough to imply Theorem~\ref{thm:shellable}, since there exist intervals in $\bigP$ that contain no non-trivial disconnected subintervals but fail to be (2+2)-free.   One example is in Figure~\ref{fig:labeling}, where the two chains $2143 < 21435$ and $2134 < 13245$ are of the form $2+2$.
\end{remark}

\section{All intervals are rank-unimodal and strongly Sperner}\label{sec:unimodal}

Since intervals in $\bigP$ are graded, it is natural to ask about the structure of these intervals with regard to the number of elements at each rank.  It is clear from Figure~\ref{fig:12-213546} that the intervals are not rank-symmetric in general.  It is also the case that the sequence of rank sizes is not log-concave in general.  For example, the interval $[1,1265473]$ has one element of rank 0 and five elements of rank 2 but only two elements of rank 1. However, there are two interesting properties that intervals of $\bigP$ have: they are rank-unimodal and strongly Sperner. Our goal in this section is to prove this assertion.  We note that neither result is known for the classical pattern poset \cite{McSt15}.  Even though the definitions of rank-unimodal and strongly Sperner are quite different, the two results appear together because they both rely heavily on the same injection between rank levels (Lemma~\ref{lem:injection}), and because Theorem~\ref{thm:griggs} below of Griggs connects the two properties.  

\subsection{Rank-unimodality}
For a finite graded poset $Q$, the set of elements of rank $i$ will be called a \emph{rank level} of $Q$, and the cardinality $a_i$ of this rank level will be referred to as the \emph{size} of rank $i$.
Recall that a finite graded poset of rank $N$ is called {\em rank-unimodal} if the sequence $a_0,a_1,\dots,a_N$ of rank sizes is unimodal. 

Before proving rank-unimodality for intervals in $\bigP$, it will be helpful and informative to examine an explicit 3-step method for constructing such intervals, up to isomorphism.  Fix $\sigma \leq \tau$, and let $n=|\tau|$. Consider the poset $P_1$ of intervals $[i,j]=\{i,i+1,\dots,j\}$ with $1\le i\le j\le n$, ordered by set inclusion. Each interval $[i,j]$ is associated with a subsequence $\subseqtau{i}{j}$ of $\tau$. The poset $P_1$ is a graded join-semilattice of rank $n-1$, having $n-r$ elements or rank $r$ for all $r$, namely $[i,i+r]$ for $1\le i\le n-r$.  See Figure~\ref{fig:L} for an example.

\begin{figure}[htbp]
\begin{center}
\begin{tikzpicture}[scale=0.8]
\tikzstyle{every node}=[rectangle, rounded corners=3pt, inner sep=3pt]
\draw (0,0) node[draw] (00) {[1,1]};
\draw (2,0) node[draw] (20) {[2,2]};
\draw (4,0) node[draw] (40) {[3,3]};
\draw (6,0) node[draw] (60) {[4,4]};
\draw (8,0) node[draw] (80) {[5,5]};
\draw (10,0) node[draw] (100) {[6,6]};
\draw (1,1) node[draw] (11) {[1,2]};
\draw[line width=1.5pt] (3,1) node[draw] (31) {[2,3]};
\draw[line width=1.5pt] (5,1) node[draw] (51) {[3,4]};
\draw (7,1) node[draw] (71) {[4,5]};
\draw[line width=1.5pt] (9,1) node[draw] (91) {[5,6]};
\draw[line width=1.5pt] (2,2) node[draw] (22) {[1,3]};
\draw[line width=1.5pt] (4,2) node[draw] (42) {[2,4]};
\draw[line width=1.5pt] (6,2) node[draw] (62) {[3,5]};
\draw[line width=1.5pt] (8,2) node[draw] (82) {[4,6]};
\draw[line width=1.5pt] (3,3) node[draw] (33) {[1,4]};
\draw[line width=1.5pt] (5,3) node[draw] (53) {[2,5]};
\draw[line width=1.5pt] (7,3) node[draw] (73) {[3,6]};
\draw[line width=1.5pt] (4,4) node[draw] (44) {[1,5]};
\draw[line width=1.5pt] (6,4) node[draw] (64) {[2,6]};
\draw[line width=1.5pt] (5,5) node[draw] (55) {[1,6]};
\draw (00) -- (11) -- (22) -- (33);
\draw (40) -- (31) -- (42) -- (53);
\draw (80) -- (71) -- (62) -- (73);
\draw (11) -- (20) -- (31) -- (22);
\draw (100) -- (91) -- (82) -- (73) -- (64) -- (55) -- (44) -- (33) -- (42) -- (51) -- (40);
\draw (44) -- (53) -- (62) -- (51) -- (60) -- (71) -- (82);
\draw (80) -- (91);
\draw (53) -- (64);
\end{tikzpicture}
\caption{For $\ist=[12,213546]$, the full poset above shows $P_1$, with $P_2$ given by the elements outlined with bold lines. Figure~\ref{fig:12-213546-injection} shows $P_3$.  In this example, the breaking rank is $b=1$ because $[4,6] \sim [1,3]$.}
\label{fig:L}
\end{center}
\end{figure}

For the second step, let $P_2$ be the poset obtained from $P_1$ by deleting all the elements $[i,j]$ such that $\subpermtau{i}{j}$ avoids $\sigma$. For the final step, let $P_3$ be the quotient\ poset\footnote{
There is more than one notion of quotient poset in the literature.  Our construction follows Hallam and Sagan \cite{HaSa15}: given an equivalence relation $\sim$ on a poset $P$, the \emph{quotient} $P/\sim$ is the set of equivalence classes, with $X \leq Y$ in $P/\sim$ if and only if $x \leq y$ in $P$ for some $x \in X$ and some $y\in Y$.  Hallam and Sagan show that $P/\sim$ is a poset if whenever $X \leq Y$ in $P/\sim$, we have that for all $y\in Y$ there exists $x\in X$ such that $x\leq y$ in $P$.  We see that $P_2/\sim$ satisfies this condition and so $P_3$ is indeed a poset.}  $P_2/\sim$, where $\sim$ is the equivalence relation defined by $[i,j]\sim[i',j']$ whenever $\subpermtau{i}{j}=\subpermtau{i'}{j'}$.
For equivalence classes in the quotient poset, denoted $\ol{[i,j]}$, recall that
one defines $\ol{[i,j]}\le\ol{[i',j']}$ if there exist $[i_1,j_1]\in\ol{[i,j]}$ and $[i_2,j_2]\in\ol{[i',j']}$ such that $[i_1,j_1] \subseteq [i_2,j_2]$.
It is clear that the map $\ol{[i,j]}\mapsto\subpermtau{i}{j}$ is a poset isomorphism between $P_3$ and the interval $[\sigma,\tau]$ in $\bigP$; see Figure~\ref{fig:12-213546-injection} for an example.  In what follows, we will work with $P_3$ instead of $\ist$ when helpful.

\begin{figure}[htbp]
\begin{center}
\begin{tikzpicture}[scale=1.3]
\tikzstyle{every node}=[rectangle, rounded corners=3pt, inner sep=3pt];
\draw (2,0) node[draw] (12) {$12\leftrightarrow\ol{[2,3]}$};
\draw (0,1) node[draw] (123) {$123\leftrightarrow\ol{[2,4]}$};
\draw (2,1) node[draw] (213) {$213\leftrightarrow\ol{[1,3]}$};
\draw (4,1) node[draw] (132) {$132\leftrightarrow\ol{[3,5]}$};
\draw (0,2) node[draw] (2134) {$2134\leftrightarrow\ol{[1,4]}$};
\draw (2,2) node[draw] (1243) {$1243\leftrightarrow\ol{[2,5]}$};
\draw (4,2) node[draw] (1324) {$1324\leftrightarrow\ol{[3,6]}$};
\draw (1,3) node[draw] (21354) {$21354\leftrightarrow\ol{[1,5]}$};
\draw (3,3) node[draw] (12435) {$12435\leftrightarrow\ol{[2,6]}$};
\draw (2,4) node[draw] (213546) {$213546\leftrightarrow\ol{[1,6]}$};
\draw (12) -- (132) -- (1324) -- (12435) -- (213546) -- (21354);
\draw (2134) -- (123) -- (12);
\draw (12) -- (213) -- (1324);
\draw[ultra thick] (213) -- (2134) -- (21354);
\draw[ultra thick] (132) -- (1243) -- (12435);
\draw (123) -- (1243) -- (21354);
\end{tikzpicture}
\caption{The interval $[12,213546]$ in $\bigP$ has already appeared in Figure~\ref{fig:12-213546}, but here it is adorned with the corresponding elements of $P_3$.  The bold edges show two examples of the injection of Lemma~\ref{lem:injection}. For the injection from $m_1=2$ elements of $E_1$ into $E_2$, we chose $k=2$ in the notation of the proof of Lemma~\ref{lem:injection}.  For the injection from $m_2=2$ elements of $E_2$ into $E_3$, our only valid option is $k=3$.}
\label{fig:12-213546-injection}
\end{center}
\end{figure}
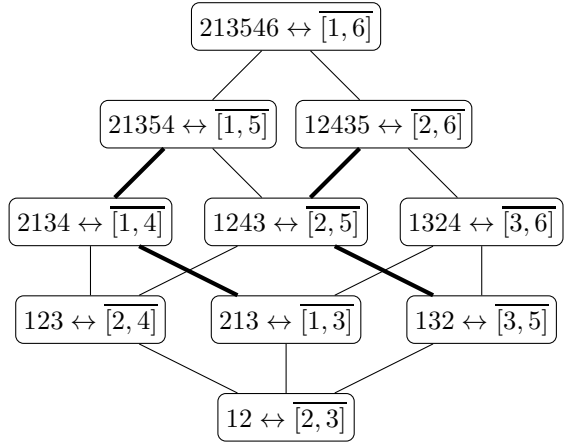

From this construction of $\ist$, we can make our first observation of this section about the structure of $\ist$: the upper ranks of $\ist$ are ``grid-like.''  More precisely, let $N=|\tau|-|\sigma|$ denote the rank of $\ist$ (or of $P_3$).  By comparing the structure of $P_1$ and $P_3$, we know there are at most $N+1-r$ elements of rank $r$ in $\ist$.  Let $b$ denote the largest value of $r$ such that the number of elements of rank $r$ is \emph{strictly less} than $N+1-r$.  We will call $b$ the \emph{breaking rank} of $\ist$.  In the context of $P_3$, $b$ is the rank of $\ol{[i,i+j]}$ where $j$ is the largest value such that either an element of the form $[i,i+j]$ was deleted when going from $P_1$ to $P_2$, or there is an equivalence class of the form $\ol{[i,i+j]}$ in $P_3$ containing more than one element of $P_2$.  For example, for $[12,213546]$ exhibited in Figures~\ref{fig:L} and~\ref{fig:12-213546-injection}, we have $b=1$ since $[4,6] \sim [1,3]$. Notice that these differences between $P_1$ and $P_3$ that happen at the breaking rank trickle down $P_3$, meaning that if $r$ is a rank less than the breaking rank then there will again be strictly less than the maximum $N+1-r$ elements of rank $r$.  

By the ``grid-like'' structure of the upper ranks of $\ist$
we mean that, by definition of the breaking rank $b$, the rank levels of rank $r$ with $b < r \leq N$ of $\ist$ are essentially the same as the top $N-b$ rank levels of $P_1$.  For example, in Figure~\ref{fig:L}, the grid-like form of the top three rank levels gives rise to the grid-like form of the top three rank levels in Figure~\ref{fig:12-213546-injection}. This observation about the upper ranks directly relates to the rank-unimodality of $\ist$ since it implies
\begin{equation}\label{eq:downward}
a_{b+1} > a_{b+2} > \dots > a_N,
\end{equation}
which will be the basis of the ``downward portion'' of the unimodality.

The following lemma will be used in both the proof of rank-unimodality and of the strongly Sperner property, and allows us to pair up elements of adjacent ranks.

\begin{lemma}\label{lem:injection}
In the interval $[\sigma,\tau]$ in $\bigP$, let $E_r$ denote the set of elements of rank $r$ for $0 \leq r \leq N$, where $N=|\tau|-|\sigma|$.  For $0 \leq r \leq N-1$, pick any $m_r \leq \min{\{|E_r|, N-r\}}$.  Then for any selection of $m_r$ elements of $E_r$, there exists an injection $f_r$ from these elements into $E_{r+1}$ with the property that $\pi < f_r(\pi)$ for each such element $\pi$.
\end{lemma}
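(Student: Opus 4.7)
The plan is to build an explicit injection $f_r : A \to E_{r+1}$ using ``extend to the right from the rightmost occurrence'' as the default rule, with a cascading leftward reroute when the rightmost occurrence is too close to the end of $\tau$.

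For each $\pi \in E_r$, I will write $R(\pi) \in \{1, 2, \ldots, N-r+1\}$ for the position of the rightmost occurrence of $\pi$ in $\tau$. Because each position of $\tau$ belongs to a unique equivalence class, $R$ is injective. For $j \geq 0$, let $\pi^{(j)}$ denote the unique element of $E_r$ with $R(\pi^{(j)}) = N-r+1-j$, whenever such an element exists; note that $\pi^{(0)}$ always exists. Given a selection $A \subseteq E_r$ of size $m_r$, set $T \geq -1$ to be the largest integer such that $\pi^{(0)}, \pi^{(1)}, \ldots, \pi^{(T)}$ all exist and belong to $A$, taking $T = -1$ when $\pi^{(0)} \notin A$.

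Writing $\ell = |\sigma| + r$, I would define
\[
f_r(\pi) = \begin{cases} \overline{\tau_{[N-r-j,\, n-j]}} & \text{if } \pi = \pi^{(j)} \text{ for some } j \in \{0, 1, \ldots, T\}, \\ \overline{\tau_{[R(\pi),\, R(\pi)+\ell]}} & \text{otherwise}. \end{cases}
\]
The inequality $T + 1 \leq m_r \leq N - r$ yields $N-r-T \geq 1$, so every interval above is a valid substring of $\tau$. The cover relation $f_r(\pi) > \pi$ is clear in each case: $\pi^{(j)}$ arises as the length-$\ell$ suffix pattern of $\tau_{[N-r-j, n-j]}$, and $\pi$ arises as the length-$\ell$ prefix pattern of $\tau_{[R(\pi), R(\pi)+\ell]}$.

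The main obstacle will be verifying injectivity of $f_r$. The cases where both arguments are non-cascaded, or both are cascaded, are routine: in the first, deleting the last entry of the matched length-$(\ell+1)$ patterns forces $\pi_1 = \pi_2$; in the second, deleting the first entry forces $\pi^{(j_1)} = \pi^{(j_2)}$ as classes, whence $j_1 = j_2$ by injectivity of $R$. The delicate case is a mixed collision $f_r(\pi^{(j)}) = f_r(\pi)$ with $\pi$ non-cascaded. Here I would delete both the first and last entries of the matching patterns: the first deletion produces an occurrence of $\pi^{(j)}$ at position $R(\pi)+1$, forcing $R(\pi) \leq N-r-j$, while the last deletion produces an occurrence of $\pi$ at position $N-r-j$, forcing $R(\pi) \geq N-r-j$. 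Together these pin down $R(\pi) = N-r-j$, making $\pi = \pi^{(j+1)}$ --- which contradicts the cascaded/non-cascaded split (when $j+1 \leq T$) or the maximality of $T$ (when $j = T$, since then $\pi^{(T+1)} = \pi \in A$ would force the cascade to extend).
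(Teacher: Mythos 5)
Your proof is correct and takes a genuinely different route from the paper's. The paper works in the auxiliary poset $P_3$ of intervals $[i,i+s] \subseteq [n]$, chooses for each selected element the representative with \emph{smallest} left endpoint $i$, observes that since $m_r\le N-r$ there is a ``gap'' position $k\in\{1,\dots,n-s\}$ that is not the chosen left endpoint of any selected element, and then defines the injection by extending to the right when $i<k$ and to the left when $i>k$; the gap $k$ cleanly separates the two regimes and makes injectivity a quick two-case check. You instead anchor each element at its \emph{rightmost} occurrence $R(\pi)$ (i.e.\ the largest left endpoint), extend to the right by default, and repair the out-of-bounds problem at the far right end by a leftward cascade on the maximal initial run $\pi^{(0)},\dots,\pi^{(T)}$ of consecutive-from-the-end elements lying in the selection. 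Both proofs turn on the same underlying mechanism---fixing an extremal occurrence so that deleting an endpoint from a matched $(\ell{+}1)$-pattern pins down the source---but your cascade version requires the three-way case analysis, including the slightly delicate mixed case where you must use both endpoint deletions together with maximality of $T$. The paper's gap-and-shift version avoids the cascade entirely, which is a real simplification; you could get an analogous simplification in your framing by noting that at most $m_r\le N-r$ of the $N-r+1$ positions are rightmost occurrences of selected elements, picking a gap position, and shifting around it. One small slip: you assert that $\pi^{(0)}$ always exists, but the length-$\ell$ suffix pattern of $\tau$ need not contain $\sigma$ (e.g.\ $\sigma=21$, $\tau=2134$, $r=1$: the suffix pattern is $123\not\ge 21$), so there may be no element of $E_r$ with $R=N-r+1$. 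This is harmless---your convention $T=-1$ when $\pi^{(0)}\notin A$ already covers it, and then all of $A$ falls into the default branch with $R(\pi)\le N-r$---but the parenthetical claim should be deleted or qualified.
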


\begin{proof}
We will work in the setting of $P_3$ as defined above, and the result will follow by the isomorphism with $\ist$.  Fix $m_r$ elements of rank $r$.  These elements of rank $r$ in $P_3$ take the form $\ol{[i, i+s]}$, where $s=|\sigma|-1+r$.  For each such element, choose the representative $[i,i+s]$ with smallest $i$.  Let $I_s \subset \{1,2,\ldots,n-s\}$ be the set of left endpoints of these $m_r$ representatives, where $n=|\tau|$ as before.  Since $m_r \leq N-r = n-s-1$, there is some $k \in \{1,2,\ldots,n-s\} \setminus I_s$.  Fix such a $k$, and define, for each $i\in I_s$,
\[
f(i)=\begin{cases} i & \text{ if }i<k,\\ i-1 & \text{ if }i>k. \end{cases}
\]
We claim that the map $\ol{[i,i+s]}\mapsto\ol{[f(i),f(i)+s+1]}$, defined for $i\in I_s$, is an injection from elements of rank $r$ to elements of rank $r+1$ in $P_3$.  See Figure~\ref{fig:12-213546-injection} for examples.

To see that this map is well defined, note first that $1\le f(i)\le n-s-1$ for all $i\in I_s$, and so $\subpermtau{f(i)}{f(i)+s+1}$ is defined.  Additionally, both $\subpermtau{i}{i+s+1}$ and $\subpermtau{i-1}{i+s}$ (when defined) contain $\subpermtau{i}{i+s}$, which translates to the desired property that $\pi < f_r(\pi)$ for all $\pi$.  Since $\subpermtau{i}{i+s}$ in turn contains $\sigma$, we get that $\ol{[f(i),f(i)+s+1]}$ is indeed an element of $P_3$.

Now we show that the map is injective.  Clearly, if $[i,i+s]\nsim [i',i'+s]$, then $[i-1,i+s]\nsim [i'-1,i'+s]$ and $[i,i+s+1]\nsim [i',i'+s+1]$.  Thus, the only possible way that injectivity could fail would be if there exist $i,i'\in I_s$ with $i<k<i'$ such that $[i,i+s+1]\sim[i'-1,i'+s]$.  But then $[i+1,i+s+1]\sim[i',i'+s]$, contradicting the choice of $i'\in I_s$ as the smallest left endpoint of a representative of the class.
\end{proof}

We can now deduce a combinatorial proof of the rank-unimodality of all intervals in $\bigP$.

\begin{corollary}\label{cor:rankunimodal}
Every interval $[\sigma,\tau]$ in $\bigP$ is rank-unimodal.
\end{corollary}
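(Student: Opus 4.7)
The plan is to combine the ``grid-like'' structure of the upper ranks of $\ist$, already responsible for \eqref{eq:downward}, with Lemma~\ref{lem:injection} applied to the lower ranks. The target shape is
\[
a_0 \le a_1 \le \cdots \le a_b \le a_{b+1} > a_{b+2} > \cdots > a_N,
\]
placing the peak at rank $b+1$, where $b$ is the breaking rank. Once this sequence of inequalities is in hand, rank-unimodality is immediate from the definition.

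The strict decrease $a_{b+1} > a_{b+2} > \cdots > a_N$ is given directly by \eqref{eq:downward}, so the remaining work is to establish the weak increase $a_r \le a_{r+1}$ for each $0 \le r \le b$. For such $r$, I would invoke the trickle-down observation noted in the paragraph preceding \eqref{eq:downward}, which gives $a_r < N+1-r$, and hence $a_r \le N-r$. This is exactly the hypothesis of Lemma~\ref{lem:injection} with the choice $m_r := a_r = |E_r|$, so applying that lemma to the entire set $E_r$ produces an injection $E_r \hookrightarrow E_{r+1}$, which forces $a_r \le a_{r+1}$.

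Concatenating these inequalities with \eqref{eq:downward} produces the unimodal shape displayed above. The corollary is really just an accounting of ingredients already in place: Lemma~\ref{lem:injection} does the combinatorial heavy lifting, and the definition of the breaking rank $b$ together with the trickle-down property ensures that the hypothesis $a_r \le N-r$ is met on precisely the range $0 \le r \le b$ where the weak increase is needed. Consequently, no genuine obstacle remains, and the only point worth double-checking is that the chain of injections runs all the way up to the interface rank $b$ with $a_{b+1}$ on its right, which it does since $a_b \le N-b = N+1-(b+1) = a_{b+1}$.
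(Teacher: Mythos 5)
Your proposal is correct and matches the paper's own proof essentially verbatim: both use \eqref{eq:downward} for the strict decrease above rank $b+1$, and both apply Lemma~\ref{lem:injection} with $m_r = |E_r|$ for $0 \le r \le b$, justified by the trickle-down observation that $a_r < N+1-r$ at and below the breaking rank. Your final remark that $a_b \le N-b = a_{b+1}$ is a nice explicit sanity check of the interface, but it is just the $r=b$ instance of the same argument.
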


\begin{proof}
Since we have already shown \eqref{eq:downward}, it remains to show that $a_0 \leq a_1 \leq \cdots \leq a_b \leq a_{b+1}$.  To see this, fix $r$ with $0 \leq r \leq b$ and consider the elements of rank $r$ in $\ist$.  By definition of the breaking rank $b$, there are strictly less than $N+1-r$ elements of rank $r$.  The result now follows directly from the injection of Lemma~\ref{lem:injection}.
\end{proof}

\begin{remark}In the case of the classical pattern poset, it is an open question whether every interval is rank-unimodal \cite{McSt15}. It is known to be true for intervals $[\sigma,\tau]$ with $|\tau| \leq 8$.
\end{remark}

\subsection{Strongly Sperner property}
This subsection is devoted to a proof that all intervals in $\bigP$ are strongly Sperner.  Let us first give the necessary background.  Recall that a graded poset $Q$ is said to be \emph{Sperner} if the largest rank size equals the size of the largest antichain.  Since every rank level of $Q$ consists of an antichain, the Sperner property is equivalent to the condition that some rank level of $Q$ is an antichain of maximum size.  For any positive integer $k$, a \emph{$k$-family} of $Q$ is a union of $k$ antichains.  We are now ready for the definition of one of the well-known generalizations of the Sperner property.

\begin{definition}
Let $Q$ be a poset of rank $N$.  For an integer $k$ with $1 \leq k \leq N+1$, we say that $Q$ is \emph{$k$-Sperner} if the sum of the sizes of the $k$ largest ranks equals the size of the largest $k$-family.  In addition, $Q$ is said to be \emph{strongly Sperner} if it is $k$-Sperner for all $k$.  
\end{definition}

Again, being $k$-Sperner is equivalent to having a set of $k$ ranks that is a $k$-family of maximum size.  Our technique for showing that every interval of $\bigP$ is strongly Sperner will make heavy use of the injections $f_r$ from Lemma~\ref{lem:injection} and of Theorem \ref{thm:griggs} below, due to Griggs \cite{Gri80}, with shorter proofs appearing in \cite{GSS80,Lec97}.  

Letting $Q$ be a poset of rank $N$, for all $1\leq i \leq N+1$, let us denote the $i$th largest rank level by $L_i$ and its size by $\ell_i$, where we break ties in size arbitrarily.  For example, for the interval $[12,213546]$ of Figure~\ref{fig:12-213546-injection}, we have two choices for $L_1$, both of which give $\ell_1=3$, while $\ell_4=\ell_5=1$.  We will say that $Q$ is \emph{$i$-rank intersecting} if there exist $\ell_i$ disjoint chains which intersect each of the rank levels $L_1, L_2, \ldots, L_i$.  In particular, note that these disjoint chains must cover $L_i$.  In addition, we say that $Q$ is \emph{strongly rank intersecting} if it is $i$-rank intersecting for all $i$ with $1\leq i \leq N+1$.  

\begin{theorem}[\cite{Gri80}]\label{thm:griggs}
If a poset $Q$ is rank-unimodal, then $Q$ is strongly Sperner if and only if it is strongly rank intersecting.
\end{theorem}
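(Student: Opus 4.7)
My plan is to relate both properties to chain decompositions of $Q$ via the classical fact that if $Q$ is partitioned into chains $C_1, \ldots, C_m$, then the maximum size of a $k$-family equals $\sum_{j=1}^m \min(k, |C_j|)$. The upper bound is immediate: a $k$-family, being a union of $k$ antichains, meets each chain in at most $k$ elements. The lower bound is obtained by selecting the top $\min(k,|C_j|)$ elements of each chain, which by Mirsky's theorem forms a union of $k$ antichains. Consequently, strongly Sperner for $Q$ is equivalent to the existence of a chain partition in which exactly $\ell_k$ chains have length at least $k$ for every $k$, since then $\sum_j \min(k,|C_j|) = \sum_{i=1}^k \ell_i$.

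For the backward direction ($\Leftarrow$), I would build such a chain partition inductively from the rank-intersecting systems. Rank-unimodality lets me assume $L_1, L_2, \ldots, L_{N+1}$ occupy consecutive ranks expanding outward from the peak, so each $i$-rank intersecting system consists of saturated chains through a window of $i$ consecutive ranks. Starting with the $\ell_{N+1}$ chains of length $N+1$ from the $(N+1)$-rank intersecting property and working down through $i = N, N-1, \ldots, 1$, at stage $i$ I would adjoin $\ell_i - \ell_{i+1}$ new chains of length $i$ spanning the top $i$ levels and using only elements not yet covered. Compatibility of these extensions is enforced by a Hall/augmenting-path argument that matches the $i$-rank intersecting chain system against the partial partition produced by the preceding stages; the resulting partition has exactly $\ell_k$ chains of length $\geq k$ for every $k$, yielding strongly Sperner via the key lemma.

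For the forward direction ($\Rightarrow$), I would argue contrapositively: if $Q$ fails to be $k$-rank intersecting, then the maximum number of vertex-disjoint chains through $L_1, \ldots, L_k$ is less than $\ell_k$. By Menger's theorem applied to the Hasse diagram restricted to these $k$ levels (with unit vertex capacities and a source below and sink above), this corresponds to a vertex cut of size $c < \ell_k$ whose removal leaves a subset of $L_1 \cup \cdots \cup L_k$ containing no chain that meets all $k$ levels. The main obstacle is converting this defect into a genuine $k$-family of $Q$ of size strictly greater than $\sum_{i=1}^k \ell_i$: one decomposes the complement of the cut within the window into at most $k-1$ antichains via Mirsky, then supplements with elements drawn from the outer levels $L_{k+1}, \ldots, L_{N+1}$ so as to absorb the deficit $\ell_k - c$ without exceeding $k$ antichains in total. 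Rank-unimodality is essential here, guaranteeing that the outer levels lie on both sides of the peak and can be assigned to the antichain classes without creating forbidden chains with the retained window elements.
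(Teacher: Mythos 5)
The paper does not prove this result; it is quoted verbatim from Griggs~\cite{Gri80} (the paper even points to shorter proofs in \cite{GSS80,Lec97}), so there is no in-text argument to compare your attempt against. Evaluating it on its own, your starting ``classical fact'' is misstated: for an arbitrary chain partition $C_1,\dots,C_m$ of $Q$, the maximum size of a $k$-family is at \emph{most} $\sum_j \min(k,|C_j|)$, with equality in general only for a minimizing partition (this equality is the Greene--Kleitman theorem, itself nontrivial). The lower-bound half of your lemma fails already for a $4$-element chain cut into two $2$-chains. Fortunately, for the direction you use it in you only need the upper bound: if one can exhibit a chain partition with exactly $\ell_i-\ell_{i+1}$ chains of length $i$ for every $i$, then $\sum_j\min(k,|C_j|)=\sum_{i\le k}\ell_i$, which together with the top $k$ ranks being a $k$-family gives $k$-Sperner. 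The genuine gap is in producing that partition. The inductive scheme you sketch -- peel off $\ell_{N+1}$ full-length chains, then adjoin $\ell_i-\ell_{i+1}$ new chains of length $i$ at each subsequent stage -- is exactly where the content of the theorem lives, and ``compatibility is enforced by a Hall/augmenting-path argument'' is an assertion, not a proof. Nothing in the definition of strongly rank intersecting gives that the $\ell_i$ disjoint chains for different $i$ can be chosen coherently (nested), and making the augmenting-path argument rigorous is the bulk of Griggs's work.

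Your forward direction is essentially right, and in fact cleaner than you describe. With rank-unimodality, $L_1,\dots,L_k$ may be taken to be $k$ consecutive rank levels; Menger applied to that window (source below, sink above, unit vertex capacities) shows that failure of $k$-rank-intersecting gives a vertex cut $C$ with $|C|<\ell_k$. Since the window has $k$ consecutive ranks, any $k$-element chain inside it is saturated and hence a source--sink path, so $W\setminus C$ has no $k$-element chain and Mirsky partitions it into at most $k-1$ antichains. Thus $W\setminus C$ is a $(k-1)$-family of size $|W|-|C|>|W|-\ell_k=\sum_{i=1}^{k-1}\ell_i$, so $Q$ fails to be $(k-1)$-Sperner and hence is not strongly Sperner. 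You do not need, and should not attempt, the step of ``supplementing with elements from the outer levels $L_{k+1},\dots,L_{N+1}$'' to build a too-large $k$-family: the size accounting there does not close (you would need the supplement to exceed $|C|$, which is not forced), and the $(k-1)$-family argument already finishes the job.
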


We now have the tools necessary to prove the promised result.

\begin{theorem}\label{thm:sperner}
Every interval $[\sigma, \tau]$ in $\bigP$ is strongly Sperner.
\end{theorem}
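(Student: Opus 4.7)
The plan is to invoke Theorem~\ref{thm:griggs}: since Corollary~\ref{cor:rankunimodal} already establishes that every interval $\ist$ is rank-unimodal, it suffices to prove that $\ist$ is strongly rank intersecting, that is, for each $1 \leq i \leq N+1$ there exist $\ell_i$ disjoint chains, each meeting every one of the $i$ largest rank levels $L_1, \ldots, L_i$.

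The first step will be to observe that, by rank-unimodality, the ties in the ordering of rank levels by size can be broken so that $L_1, \ldots, L_i$ correspond to a consecutive block of ranks $\{p, p+1, \ldots, q\}$ containing the peak of the size sequence $a_0, a_1, \ldots, a_N$. The idea is a greedy construction: starting from the peak plateau, iteratively enlarge the interval by adjoining whichever of the adjacent ranks $p-1$ or $q+1$ has larger size, breaking ties arbitrarily. Rank-unimodality ensures that no rank outside the current interval exceeds $\min(a_p, a_q)$, so the interval at each stage consists of the largest available ranks. In particular, $\ell_i = \min(a_p, a_q)$ and $\ell_i \leq a_r$ for every $r \in [p, q]$.

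Next I would build the $\ell_i$ disjoint chains by iterating Lemma~\ref{lem:injection}. Choose an initial subset $S_p \subseteq E_p$ of size $\ell_i$: take $S_p = E_p$ if $\ell_i = a_p$, and any $\ell_i$ elements of $E_p$ otherwise. Then for each $r = p, p+1, \ldots, q-1$ in turn, apply Lemma~\ref{lem:injection} with $m_r = \ell_i$ to $S_r$, producing an injection $f_r \colon S_r \to E_{r+1}$ with $\pi < f_r(\pi)$, and set $S_{r+1} = f_r(S_r)$. The hypothesis $\ell_i \leq \min\{|E_r|, N-r\}$ holds because $\ell_i$ is the minimum of the $a_r$ over $r \in [p, q]$, forcing $\ell_i \leq |E_r|$, and because the general bound $|E_{r+1}| \leq N-r$ on rank sizes yields $\ell_i \leq |E_{r+1}| \leq N-r$. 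Linking each $\pi \in S_p$ with its successive images gives a chain from $E_p$ up to $E_q$; by injectivity of the $f_r$, the $\ell_i$ chains so obtained are pairwise disjoint. Each chain meets every $E_r$ with $r \in [p, q]$ by construction, hence intersects all of $L_1, \ldots, L_i$. To check that $L_i$ is actually covered, note that if $\ell_i = a_p$ then $L_i = E_p$ is covered by our choice $S_p = E_p$, while if $\ell_i = a_q$ then $|S_q| = \ell_i = |E_q|$ forces $S_q = E_q = L_i$.

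The main technical obstacle I anticipate is the initial bookkeeping of choosing $L_1, \ldots, L_i$ to arise from a consecutive block of ranks; once that is set up, the iterated application of Lemma~\ref{lem:injection} is essentially mechanical, since its injection property is exactly what is needed to propagate a set of size $\ell_i$ upward through the block $[p, q]$ while preserving the covering relations required to form chains.
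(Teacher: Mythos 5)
Your proposal is correct and follows essentially the same route as the paper's proof: rank-unimodality plus Griggs' theorem reduces the problem to the strongly rank intersecting property, which is then established by choosing a consecutive block of ranks containing the peak and iterating the injection of Lemma~\ref{lem:injection} across that block, using the bound $a_r \le N+1-r$ to verify the hypothesis $m_r \le N-r$. The only cosmetic differences are that you spell out the greedy argument for why the $i$ largest ranks can be taken consecutive (the paper asserts this directly from unimodality) and that you explicitly verify $L_i$ is covered, which the paper notes is automatic from the definition of rank intersecting.
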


\begin{proof}
By Theorem~\ref{thm:griggs}, it suffices to show that $Q=\ist$ is strongly rank intersecting, and throughout this proof we will follow the notation of the paragraph immediately preceding Theorem~\ref{thm:griggs}.  Thus, fixing $i$ with $1 \leq i \leq N+1$, consider the $i$ largest rank levels $L_1, L_2, \ldots, L_i$.  Since $\ist$ is rank-unimodal and we can break ties among rank sizes arbitrarily, we can assume that $L_1, L_2, \ldots, L_i$ form a set of consecutive rank levels in some order, with $L_i$ occurring at one of the two extremes of the consecutive sequence.  Let us suppose that these rank levels occupy ranks $r_1$ through $r_2$ in $\ist$.

As we will see, the injections $f_r$ of Lemma~\ref{lem:injection} will be exactly what we need to complete the proof.  Since we wish to construct $\ell_i$ disjoint chains, we will be picking $\ell_i$ elements of rank $r$ for all $r_1 \leq r < r_2$.  In order to apply the lemma, we need to show that $\ell_i \leq N-r$ for all such $r$, meaning that we need $\ell_i \leq N-r_2+1$.  By definition of $L_i$ and hence $\ell_i$, we have $\ell_i = \min\{a_{r_1}, a_{r_2}\}$, where $a_j$ again denotes the number of elements of rank $j$.  We already showed when considering the grid-like structure of the upper levels of $\ist$ that $a_r \leq N+1-r$.  Thus $\ell_i \leq a_{r_2} \leq N-r_2+1$, as required.

Since $\ell_i = \min\{a_{r_1}, a_{r_2}\}$, we can start by picking $\ell_i$ elements of rank $r_1$.  Applying $f_{r_1}$ to these elements, we construct $\ell_i$ disjoint chains of length 1 from rank $r_1$ to rank $r_1+1$.  Continuing in this fashion, applying $f_{r_1+1}, f_{r_1+2}, \ldots, f_{r_2-1}$, we construct $\ell_i$ disjoint chains that intersect each of the rank levels $r_1$ through $r_2$, as required.  See the bold edges in Figure~\ref{fig:12-213546-injection} for an example with $i=3$. 
\end{proof}

\begin{remark}
Generalizing the Sperner property in a different way, a poset is said to be \emph{strictly Sperner} if every antichain of maximum size is a rank level.  To see that this property does not hold for intervals in $\bigP$, consider the antichain $\{123, 1432\}$ in the interval $[12, 12543]$.
\end{remark}

We conclude this section with an open problem.  When $\sigma$ occurs just once in $\tau$, we know from Proposition~\ref{pro:occursonce} that $[\sigma, \tau]$ is a lattice.
 
\begin{problem}
Characterize those intervals $\ist$ in $\bigP$ that are lattices.  As usual, we would prefer our characterization to be in terms of simple conditions on $\sigma$ and $\tau$.
\end{problem}

For example, for $|\tau|=4$, $[1,\tau]$ is a lattice if and only if either $\subpermtau{1}{3}$ or $\subpermtau{2}{4}$ (or both) is monotone, since otherwise both will be upper bounds for 12 and 21.

\section{The exterior of a permutation}
\label{sec:exterior}

\subsection{The length of the exterior}\label{sub:lengthexterior}
As we have seen in the previous sections, particularly in Theorem~\ref{thm:mobius} and Proposition~\ref{prop:xtautau}, the exterior of a permutation plays an important role in the study of $\bigP$.  From an enumerative perspective, a natural problem is to study the distribution of the statistic {\em length of the exterior} on permutations. Table~\ref{tab:exterior} shows the number of permutations according to this statistic. The first column (after the one indexing $n$) counts permutations whose exterior has length 1. Permutations with this property are usually called \emph{non-overlapping permutations} in the literature. It was shown by B\'ona~\cite{Bona11} that the number of non-overlapping permutations of length $n$ is approximately $0.364 n!$, but no exact formula is known.
Next we show a simple divisibility property of these numbers.

\begin{table}[htb]
\begin{tabular}{r|rrrrrrrrr}
$n\backslash k$ & 1 & 2 & 3 & 4 & 5 & 6 & 7 & 8 & 9\\ \hline
2 & 2 &&&&&&&&\\
3 & 4 & 2 &&&&&&&\\
4 & 12 & 10 & 2 &&&&&&\\
5 & 48 & 58 & 12 & 2 &&&&&\\
6 & 280 & 306 & 118 & 14 & 2 &&&&\\
7 & 1864 & 2186 & 822 &150 & 16 & 2 &&&\\
8 & 14840 & 17034 & 6580 & 1660 & 186 & 18 & 2 &&\\
9 & 132276 & 154162 & 58854 & 15118 & 2222 & 226 & 20 & 2 &\\
10 & 1323504 & 1532574 & 588898 & 150388 & 30238 & 2904 & 270 & 22 & 2
\end{tabular}
\caption{The number of permutations $\tau\in\S_n$ with $|x(\tau)|=k$.}
\label{tab:exterior}
\end{table}

\begin{lemma}\label{lem:nonov4}
For every $n\ge3$, the number of non-overlapping permutations in $\S_n$ is divisible by 4.
\end{lemma}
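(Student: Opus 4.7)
The plan is to exhibit a free action of the Klein four-group $G = \{e, r, c, rc\}$ on the set of non-overlapping permutations in $\S_n$ (for $n \geq 3$), where $r$ denotes reversal and $c$ denotes complement. Since free actions partition the set into orbits of size $|G| = 4$, the divisibility by $4$ follows immediately.

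First I would verify that $r$ and $c$ both preserve the statistic $|x(\tau)|$, and hence preserve the property of being non-overlapping. This is because reversal exchanges prefixes and suffixes of a given length (reversing their reductions), and complement sends the prefix (respectively suffix) of length $k$ to the prefix (respectively suffix) of length $k$ of the complement, again transforming reductions by complementation; in either case a bifix of $\tau$ of length $k$ corresponds to a bifix of $r(\tau)$ or $c(\tau)$ of the same length $k$. Since $rc$ is their composition, it also preserves $|x(\tau)|$. Thus $G$ acts on the set of non-overlapping permutations in $\S_n$.

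Next I would check freeness of the action, i.e., no non-identity element of $G$ fixes a non-overlapping $\tau \in \S_n$ for $n \geq 3$. Reversal fixes $\tau$ only if $\tau_i = \tau_{n+1-i}$ for all $i$, which forces $\tau_1 = \tau_n$ and thus is impossible in $\S_n$ for $n \geq 2$. Complement fixes $\tau$ only if $\tau_i = (n+1)/2$ for every $i$, also impossible for $n \geq 2$. The remaining case, where $rc$ fixes $\tau$, is the only one requiring the non-overlapping hypothesis; this will be the main step. The condition $rc(\tau) = \tau$ is equivalent to $\tau_i + \tau_{n+1-i} = n+1$ for all $i$. Applied to $i = 1, 2$, this gives $\tau_{n-1} = n+1-\tau_2$ and $\tau_n = n+1-\tau_1$, so the suffix $\tau_{n-1}\tau_n$ has the same relative order as $\tau_1\tau_2$ (both reduce to $12$ when $\tau_1 < \tau_2$ and to $21$ otherwise). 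Since $n \geq 3$, both $\tau_1\tau_2$ and $\tau_{n-1}\tau_n$ are proper, so they witness a bifix of length $2$, forcing $|x(\tau)| \geq 2$ and thus $\tau$ is not non-overlapping. This contradiction completes the verification of freeness.

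With the action shown to be free, the set of non-overlapping permutations in $\S_n$ splits into orbits of size $4$, proving the divisibility. The only potentially delicate point is the $rc$-fixed case in the freeness argument, but the direct computation on prefixes and suffixes of length~$2$ handles it cleanly; the other two cases are immediate.
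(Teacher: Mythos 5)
Your proof is correct and takes essentially the same approach as the paper's: both partition the set of non-overlapping permutations into size-four orbits under the Klein four-group generated by reversal and complement, with the key step being that $\tau = rc(\tau)$ forces the length-two prefix and suffix of $\tau$ to have the same reduction, contradicting non-overlappingness. The paper phrases it as a direct partition into four-element subsets while you phrase it as a free group action, but the substance is identical.
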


\begin{proof}
If $\tau\in \S_n$ is non-overlapping, then so are its reversal, its complement, and its reverse-complement, and these four permutations are all different. Indeed, if two of these were the same, then $\tau$ would be equal to its reverse-complement, which would imply that $\tau$ starts with a descent if and only if it ends with a descent, and so $\tau$ would be overlapping. Thus, we have partitioned the set of non-overlapping permutations into disjoint sets of size 4.
\end{proof}

While the above property of the first column of Table~\ref{tab:exterior} is straightforward to prove, there seem to be other less obvious congruence relations. For example, the data in the second column suggests that the number of permutations in $\S_n$ with exterior of length 2 might always be congruent to 2 modulo 4.

The entries in the rightmost nonzero diagonal of Table~\ref{tab:exterior} count permutations $\tau$ with $|x(\tau)|=|\tau|-1$. It is clear that
the number of such permutations in $\S_n$ is $2$ for every $n\ge2$, since by Lemma~\ref{lem:monotone}, only the monotone permutations $12\dots n$ and $n\dots 21$ satisfy this condition.
The entries in the diagonal immediately below these $2$s count permutations $\tau$ with $|x(\tau)|=|\tau|-2$. In Subsection~\ref{sub:finding_disconnected} we described the structure of the intervals under such permutations. Next we enumerate these permutations.
\begin{lemma}
For every $n\ge4$, 
\[ 
|\{\tau\in\S_n:|x(\tau)|=n-2\}|=2n+2.
\]
\end{lemma}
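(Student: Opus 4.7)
The plan is to reduce the problem to counting up-down permutations satisfying the condition, and then double the result using the complement bijection. By Lemma~\ref{lem:monotone}, the condition $|x(\tau)| = n-2$ is equivalent to $\subpermtau{1}{n-2} = \subpermtau{3}{n}$ together with $\tau$ being non-monotone. As established in Subsection~\ref{sub:finding_disconnected} (see~(\ref{eq:shift2})), such a $\tau$ satisfies $\subpermtau{i}{j} = \subpermtau{i+2}{j+2}$ for all valid $i,j$, which forces $\tau$ to be either up-down or down-up and makes both the odd-indexed and the even-indexed subsequences of $\tau$ monotone. Since the complement bijection maps up-down to down-up and preserves $|x(\tau)|$, it suffices to count the up-down examples and double the result.

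For an up-down $\tau$ of length $n$, write $v_1, \dots, v_s$ for its valleys (at odd positions) and $p_1, \dots, p_t$ for its peaks (at even positions), where $s = \lceil n/2 \rceil$ and $t = \lfloor n/2 \rfloor$. The shift-by-2 invariance implies that the truth of ``$v_i < p_j$'' depends only on $d := i - j$, so each such comparison is governed by a rule assigning $<$ or $>$ to each $d \in \{-(t-1), \dots, s-1\}$. I will split into four cases based on the monotonicity directions of $(v_k)$ and $(p_k)$. When both are increasing, monotonicity and transitivity propagate the relation $v_i < p_j$ to pairs with $d$-value $d-1$ (via either $v_{i-1} < v_i$ or $p_j < p_{j+1}$), so the rule takes the threshold form ``$<$ for $d \le d^*$ and $>$ for $d > d^*$''. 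The zigzag requirements $v_i < p_i$ and $v_{i+1} < p_i$ force $d^* \ge 1$, and $d^* \in \{1, 2, \dots, s-1\}$ gives $s-1$ distinct permutations. An analogous argument in the both-decreasing case yields $t$ permutations indexed by thresholds $d^{**} \in \{0, -1, \dots, -(t-1)\}$.

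In each of the two mixed cases (one increasing and one decreasing), the propagation argument pushes the rule in \emph{both} directions along $d$, forcing it to be constant; combined with zigzag, the rule is ``$<$'' everywhere, giving a unique permutation. Summing the four cases, the up-down count is $(s-1) + t + 1 + 1 = s + t + 1 = n+1$, and doubling via the complement yields $2n+2$. The main technical hurdle will be verifying the propagation arguments carefully (using transitivity with the monotonicity of $(v_k)$ and $(p_k)$) and checking that distinct thresholds in the two same-direction cases produce distinct permutations; the latter follows because each threshold value fully determines the linear order on $\{v_1, \dots, v_s\} \cup \{p_1, \dots, p_t\}$, hence the permutation.
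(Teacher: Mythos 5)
Your argument is correct and essentially the same as the paper's: both reduce to the up-down case via complement, split into four sub-cases by the monotonicity directions of the valley and peak sequences, and observe that a single discrete parameter determines the permutation in each sub-case. Your threshold parameter $d^*$ (resp.\ $d^{**}$) is just a reindexing of the paper's choice of where $\tau_2$ sits among the valleys (resp.\ where $\tau_1$ sits among the peaks), and the counts agree: $s-1=\lfloor(n-1)/2\rfloor$ and $t=\lfloor n/2\rfloor$.
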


\begin{proof}
Since the length of the exterior of any permutation is the same for its complement, we can count permutations with $\tau_1<\tau_2$ and multiply the final number by 2.  We have shown that every $\tau\in\S_n$ with $|x(\tau)|=n-2$ satisfies Equation~\eqref{eq:shift2}. It follows that $\tau_i<\tau_{i+1}$ for every odd $i$ and, since $\tau$ is not monotone (otherwise $|x(\tau)|=n-1$), that $\tau_i>\tau_{i+1}$ for every even $i$, so $\tau$ is alternating.

By Equation~\eqref{eq:shift2}, the relative order of $\tau_1$ and $\tau_3$ determines whether the sequence of valleys of $\tau$ is increasing or decreasing, and similarly the relative order of $\tau_2$ and $\tau_4$ determines it for the sequence of peaks.
When one of these sequences is increasing and the other decreasing, which happens when $\subpermtau{1}{4}$ equals $1423$ or $2314$, 
then $\tau$ is completely determined.

If both sequences are increasing, that is, $\subpermtau{1}{4}=1324$, then $\tau$ is determined once we choose the value of $\tau_2$ relative to the valleys of $\tau$, that is, for which index $2\le i\le \lfloor\frac{n+1}{2}\rfloor$ we have $\tau_{2i-1}<\tau_2<\tau_{2i+1}$ (where we define $\tau_{n+1}=\tau_{n+2}=\infty$ for convenience). This choice forces the relative order of all peaks with respect to the valleys. There are $\lfloor\frac{n-1}{2}\rfloor$ possibilities for $\tau$ in this case.

Similarly, if both sequences are decreasing, that is, $\subpermtau{1}{4}$ equals $2413$ or $3412$, then $\tau$ is determined once we choose the value of $\tau_1$ relative to the peaks of $\tau$, that is, for which index $1\le i\le \lfloor\frac{n}{2}\rfloor$ we have $\tau_{2i}>\tau_1>\tau_{2i+2}$ (where we define $\tau_{n+1}=\tau_{n+2}=-\infty$ for convenience).  There are $\lfloor\frac{n}{2}\rfloor$ possibilities for $\tau$ in this case, with just the case $i=1$ corresponding to $\subpermtau{1}{4} = 3412$.

In total, the number of $\tau\in\S_n$ with $|x(\tau)|=n-2$ is
$$2\left(2+\left\lfloor\frac{n-1}{2}\right\rfloor+\left\lfloor\frac{n}{2}\right\rfloor\right)=2n+2.$$
\end{proof}

We do not have formulas for the other entries of Table~\ref{tab:exterior}.

\begin{problem}
Find a formula for the entries of Table~\ref{tab:exterior}.
\end{problem}

\subsection{Asymptotic behavior of the length of the exterior}\label{sub:asymp_ext}
Even without having exact formulas, we can study the behavior of $|x(\tau)|$ as the length of $\tau$ goes to infinity. 
For given $n$, we will write $\P_n$ and $\E_n$ to denote the probability and the expectation of events involving $\tau\in\S_n$ chosen uniformly at random. In other words, $\tau$ is a random variable with uniform distribution over $\S_n$.
We also use the notation $g(n)=o(f(n))$ or $g(n)\ll f(n)$ to mean that $\lim_{n\to \infty}g(n)/f(n)=0$.

The main result in this subsection is a constant asymptotic upper bound on the expected length of the exterior of a permutation.  We show that, as the length of the permutation tends to infinity, the limit of this expected value exists and is between $e-1$ and $e$.
We start with a lemma that bounds the probability that the length of the exterior is large. A similar result appears in~\cite[Lemma 18]{Per13}.

\begin{lemma}\label{lem:xgem}
For $1\le m\le n-1$, $$\P_n(|x(\tau)|\ge m)\le \sum_{i=m}^{\lfloor n/2 \rfloor} \frac{1}{i!} + o(n^{-1}),$$
where the summation is defined to be zero when $m>\lfloor n/2 \rfloor$.
\end{lemma}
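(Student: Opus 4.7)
The plan is to apply a union bound over bifix lengths and then estimate each term. Since $|x(\tau)|\ge m$ holds exactly when $\tau$ has some bifix of length at least $m$,
\[
\P_n(|x(\tau)|\ge m)\le\sum_{i=m}^{n-1}\P_n\!\left(\subpermtau{1}{i}=\subpermtau{n-i+1}{n}\right).
\]
For $i\le\lfloor n/2\rfloor$ the windows $\{1,\dots,i\}$ and $\{n-i+1,\dots,n\}$ are disjoint, so a direct count yields $\P_n(\subpermtau{1}{i}=\subpermtau{n-i+1}{n})=1/i!$ exactly: fixing $\sigma\in\S_i$, the number of $\tau\in\S_n$ having both windows of pattern $\sigma$ is $\binom{n}{i}\binom{n-i}{i}(n-2i)!=n!/(i!)^2$, so summing over the $i!$ patterns gives $n!/i!$ permutations with a bifix of length $i$. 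This contributes exactly the main sum $\sum_{i=m}^{\lfloor n/2\rfloor}1/i!$.

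It then remains to show that $\sum_{i=\lfloor n/2\rfloor+1}^{n-1}\P_n(\subpermtau{1}{i}=\subpermtau{n-i+1}{n})=o(n^{-1})$. Writing $i=n-k$ with $1\le k<n/2$, the bifix condition $\subpermtau{1}{n-k}=\subpermtau{k+1}{n}$ is equivalent to $\tau_a<\tau_b\Leftrightarrow\tau_{a+k}<\tau_{b+k}$ for all $1\le a,b\le n-k$; in particular the values of $\tau$ along each residue class modulo $k$ are forced to be monotone. I would argue that the relation in fact propagates the values themselves: once $\tau_1,\dots,\tau_k$ are specified, the condition that $\tau_{j+k}$ has the same rank in the suffix window as $\tau_j$ has in the prefix window determines $\tau_{k+1},\tau_{k+2},\dots,\tau_n$ one entry at a time. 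This gives an injection $\tau\mapsto(\tau_1,\dots,\tau_k)$ from bifix-$(n-k)$ permutations into the $n(n-1)\cdots(n-k+1)$ ordered $k$-tuples of distinct elements of $[n]$, hence
\[
\P_n(\text{bifix of length }n-k)\le\frac{n(n-1)\cdots(n-k+1)}{n!}=\frac{1}{(n-k)!}=\frac{1}{i!}.
\]
Since $\sum_{i=\lfloor n/2\rfloor+1}^{n-1}1/i!\le n/(\lfloor n/2\rfloor+1)!$, and the factorial in the denominator grows super-exponentially in $n$, this tail is $o(n^{-1})$.

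The main obstacle will be making the injection step rigorous, because one must simultaneously track, for each residue class modulo $k$, both the (a priori unknown) set of values populating it and its direction of monotonicity, and verify that these are uniquely forced by $\tau_1,\dots,\tau_k$ through the bifix relation. A safer fallback, should the clean injection prove delicate, is to bound the count of bifix-$(n-k)$ permutations by the coarser count $2^k\binom{n}{n_1,\dots,n_k}$ of permutations whose residue classes modulo $k$ are merely monotone (with $n_r$ the size of class $r$); this gives probability at most $(2/\lfloor n/k\rfloor!)^k$, which is already super-exponentially small when $\lfloor n/k\rfloor\ge 3$, i.e.\ $k\le n/3$. The narrow remaining range $n/3<k<n/2$, where $\lfloor n/k\rfloor=2$ and the monotonicity bound degenerates, can then be covered by returning to the propagation argument only for such $k$, where the large overlap between prefix and suffix windows makes the bifix constraint strong enough to pin down the partition.
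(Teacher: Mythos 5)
Your overall structure mirrors the paper's proof exactly: union bound over bifix lengths $B_i$, the exact value $\P_n(B_i)=1/i!$ for $i\le\lfloor n/2\rfloor$ (your double-counting argument is a fine alternative to the paper's one-line justification), and a tail estimate for $i>\lfloor n/2\rfloor$. The only real divergence is in the tail estimate, and that is where there is a gap.

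Your primary tail argument asserts the bound $\P_n(B_i)\le 1/i!$ via an injection $\tau\mapsto(\tau_1,\dots,\tau_k)$ on permutations with a bifix of length $i=n-k$. The ``propagation'' heuristic you give (``$\tau_{j+k}$ has the same rank in the suffix window as $\tau_j$ has in the prefix window, determining the entries one at a time'') is circular: the rank of $\tau_j$ in the window $\{\tau_1,\dots,\tau_{n-k}\}$ depends on $\tau_{k+1},\dots,\tau_{n-k}$, which are exactly the entries you have not yet reconstructed. You flag this yourself as ``the main obstacle,'' but the proposal never closes it. Your fallback is correctly computed — permutations with monotone residue classes modulo $k$ have probability $\le(2/\lfloor n/k\rfloor!)^k$ — but this bound is identically $1$ when $\lfloor n/k\rfloor=2$, i.e.\ throughout $n/3<k<n/2$. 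Your proposed patch for that range (``return to the propagation argument, where the large overlap\dots makes the bifix constraint strong enough'') is again unproven, and the claimed intuition is off: the positional overlap $[k+1,n-k]$ has size $n-2k$, which is \emph{small}, not large, in this range.

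For comparison, the paper's tail argument extracts a much weaker but uniformly useful consequence of the bifix condition: all $\lfloor n/(n-i)\rfloor$ disjoint blocks $\subpermtau{1}{n-i},\subpermtau{n-i+1}{2(n-i)},\dots$ must have the same pattern, giving $\P_n(B_i)\le 1/\bigl((n-i)!\bigr)^{\lfloor n/(n-i)\rfloor-1}$. Even in your problematic range $n/3<k<n/2$, just requiring $\subpermtau{1}{k}=\subpermtau{k+1}{2k}$ already yields $\P_n(B_{n-k})\le 1/k!$, and $\sum_{k>n/3}1/k!$ is super-polynomially small — this single observation would close your gap cleanly. If your injection claim is in fact true, it would be a stronger and more elegant bound, but as written it is unproved, so the proposal is incomplete.
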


\begin{proof}
For $1\le i\le n-1$, let $B_i$ be the event ``$\tau$ has a bifix of length $i$,'' that is, $\subpermtau{1}{i}=\subpermtau{n-i+1}{n}$. By definition, $|x(\tau)|$ is the largest $i$ such that $B_i$ holds, and
$$\P_n(|x(\tau)|\ge m)=\P_n\left(\bigcup_{i=m}^{n-1}B_i\right)\le \sum_{i=m}^{n-1} \P_n(B_i).$$
When $i\le \lfloor n/2 \rfloor$,
$$\P_n(B_i)=\frac{1}{i!},$$ since among all the possible permutations of the last $i$ entries, $B_i$ holds for exactly one of them.

When $i>\lfloor n/2 \rfloor$, we have the upper bound
\begin{equation}\P_n(B_i)\le \frac{1}{(n-i)!^{\lfloor \frac{n}{n-i}\rfloor-1}}.\label{eq:Bi}\end{equation}
This is because if we break up $\tau$ into $\lfloor \frac{n}{n-i}\rfloor$ blocks of $n-i$ entries starting at the beginning: $\subpermtau{1}{n-i}, \subpermtau{n-i+1}{2(n-i)},\dots$,
then, for $B_i$ to hold, the entries in all of these blocks have to be in the same relative order. 
For $\lfloor n/2 \rfloor<i\le n-\lfloor\sqrt{n}\rfloor$, we have that $n-i\ge\lfloor\sqrt{n}\rfloor$ and $\lfloor \frac{n}{n-i}\rfloor\ge2$, and so expression~\eqref{eq:Bi} gives
$$\P_n(B_i)\le\frac{1}{\lfloor\sqrt{n}\rfloor!}.$$
For $n-\lfloor\sqrt{n}\rfloor< i\le n-2$, we have that $n-i\ge2$ and $\lfloor \frac{n}{n-i}\rfloor\ge\lfloor\sqrt{n}\rfloor$, and so
$$\P_n(B_i)\le\frac{1}{2^{\lfloor\sqrt{n}\rfloor-1}}.$$
When $i=n-1$, it is easy to see directly that $$\P_n(B_{n-1})=\frac{2}{n!},$$ since by Lemma~\ref{lem:monotone}, $B_{n-1}$ only holds for the two monotone permutations.

Combining the above bounds, we get
$$\P_n(|x(\tau)|\ge m)\le \sum_{i=m}^{\lfloor n/2 \rfloor} \frac{1}{i!} + \sum_{i=\lfloor n/2 \rfloor+1}^{n-\lfloor\sqrt{n}\rfloor}\frac{1}{\lfloor\sqrt{n}\rfloor!}+\sum_{i=n-\lfloor\sqrt{n}\rfloor+1}^{n-2}\frac{1}{2^{\lfloor\sqrt{n}\rfloor-1}}+\frac{2}{n!}.$$
Clearly, the terms other than the first summation approach 0 faster than $n^{-k}$ for any positive constant $k$, as $n$ tends to infinity.
\end{proof}

\begin{theorem}\label{thm:expectedxlength} The limit of $\E_n(|x(\tau)|)$ as $n$ goes to infinity exists and
$$e-1\le \lim_{n\to\infty} \E_n(|x(\tau)|) \le e.$$
\end{theorem}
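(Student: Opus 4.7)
The plan is to start from the tail-sum identity
\[
\E_n(|x(\tau)|) = \sum_{m=1}^{n-1} \P_n(|x(\tau)| \ge m)
\]
and combine it with the tail estimate in Lemma~\ref{lem:xgem}. For the upper bound, inserting that estimate term by term and swapping the order of summation gives
\[
\sum_{m=1}^{n-1}\sum_{i=m}^{\lfloor n/2\rfloor}\frac{1}{i!} = \sum_{i=1}^{\lfloor n/2\rfloor}\frac{i}{i!} = \sum_{i=1}^{\lfloor n/2\rfloor}\frac{1}{(i-1)!}\longrightarrow e,
\]
while the accumulated error is only $(n-1)\cdot o(n^{-1})=o(1)$, yielding $\limsup_n \E_n(|x(\tau)|)\le e$. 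For the lower bound, the trivial estimate $\P_n(|x(\tau)|\ge m)\ge \P_n(B_m)=1/m!$ for $m\le\lfloor n/2\rfloor$ gives
\[
\E_n(|x(\tau)|)\ge \sum_{m=1}^{\lfloor n/2\rfloor}\frac{1}{m!}\longrightarrow e-1,
\]
so $\liminf_n \E_n(|x(\tau)|)\ge e-1$.

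To show that the limit actually exists, I would first prove pointwise convergence of the tail probabilities: for each fixed $m$, $q_n(m):=\P_n(|x(\tau)|\ge m)$ tends to a limit $q(m)$ as $n\to\infty$. By inclusion-exclusion over the events $B_i$,
\[
q_n(m) = \sum_{k\ge 1}(-1)^{k+1}\sum_{m\le i_1<\cdots<i_k\le n-1}\P_n(B_{i_1}\cap\cdots\cap B_{i_k}).
\]
The same disjoint-positions counting that gives $\P_n(B_i)=1/i!$ shows that, whenever $i_k\le n/2$, we have $\P_n(B_{i_1}\cap\cdots\cap B_{i_k}) = c(i_1,\dots,i_k)/(i_k!)^2$, where $c(i_1,\dots,i_k)$ counts $\pi\in\S_{i_k}$ whose length-$i_j$ prefix and suffix have the same reduction for each $j<k$; crucially, this probability is independent of $n$. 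Hence $P(m,M):=\P_n(\exists\,i\in[m,M]:B_i)$ is $n$-independent once $n\ge 2M$, monotone non-decreasing in $M$, and bounded by $1$, so $q(m):=\lim_{M\to\infty}P(m,M)$ exists. The residual $q_n(m)-P(m,\lfloor n/2\rfloor)$ is non-negative and at most $\P_n(\exists\,i>n/2:B_i)$, which tends to $0$ by the bounds in the proof of Lemma~\ref{lem:xgem}; hence $q_n(m)\to q(m)$.

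Finally, to deduce $\lim_n\E_n(|x(\tau)|) = \sum_{m\ge 1} q(m)$, split the tail sum at an arbitrary threshold $M$: the initial block $\sum_{m=1}^M q_n(m)$ converges to $\sum_{m=1}^M q(m)$, while Lemma~\ref{lem:xgem} (whose error term is uniform in $m$) bounds $\sum_{m=M+1}^{n-1} q_n(m)$ by $\sum_{j\ge M}\frac{1}{j!}+o(1)$, which can be made arbitrarily small by choosing $M$ large. This establishes convergence, and the earlier two-sided bounds force the limit into $[e-1,e]$. The main obstacle is the pointwise-convergence step: one must observe that the joint probabilities $\P_n(B_{i_1}\cap\cdots\cap B_{i_k})$ stabilize once $n$ exceeds $2i_k$, and separately control the contribution from bifixes of length greater than $n/2$; everything else reduces to careful but routine manipulations built on Lemma~\ref{lem:xgem}.
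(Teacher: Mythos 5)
Your upper and lower bound computations coincide exactly with the paper's: the tail-sum identity, swapping the order of summation against Lemma~\ref{lem:xgem} to get the $e$ upper bound, and using $\P_n(|x(\tau)|\ge m)\ge\P_n(B_m)=1/m!$ for $m\le\lfloor n/2\rfloor$ to get the $e-1$ lower bound. Both steps are correct.

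Where you genuinely diverge from the paper is the existence of the limit. The paper constructs a bijection $\S_n\to\S_{n_0}\times\S_{n-n_0}\times\binom{[n]}{n_0}$ (splitting $\tau$ into its first $\lfloor n_0/2\rfloor$ entries, its last $\lceil n_0/2\rceil$ entries, and the middle) and uses it to deduce a quasi-monotonicity statement $\E_{n_0}(|x(\tau)|)\le\E_n(|x(\tau)|)+o'(1)$, from which boundedness forces $\limsup=\liminf$. You instead prove pointwise convergence of the tail probabilities $q_n(m)=\P_n(|x(\tau)|\ge m)$ for each fixed $m$, and then upgrade that to convergence of $\E_n(|x(\tau)|)=\sum_m q_n(m)$ via the uniform-in-$m$ tail control already present in Lemma~\ref{lem:xgem}. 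The key observation that $\P_n(B_{i_1}\cap\cdots\cap B_{i_k})$ stabilizes once $n\ge 2i_k$ is correct and is most transparently seen by noting that $\subpermtau{1}{i_k}$ and $\subpermtau{n-i_k+1}{n}$ are independent uniform elements of $\S_{i_k}$ when the two position windows are disjoint; your inclusion--exclusion route arrives at the same conclusion. (You could streamline this step by observing directly that the event $\bigcup_{i=m}^{M}B_i$ is measurable with respect to the pair $(\subpermtau{1}{M},\subpermtau{n-M+1}{n})$, whose joint law is $n$-independent for $n\ge 2M$, avoiding inclusion--exclusion entirely.) Both arguments are valid; the paper's is a compact, purely expectation-level argument, while yours is a dominated-convergence-style argument at the level of distributions and in fact delivers something strictly stronger than Theorem~\ref{thm:expectedxlength}: it establishes that $|x(\tau)|$ has a limiting distribution, which the paper only remarks on without proof in its discussion of Problem~\ref{sub:asymp_ext}.
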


\begin{proof}
By definition of expectation, 
\begin{align}\nonumber\E_n(|x(\tau)|)&=\sum_{i=1}^{\lfloor n/2 \rfloor} i\,\P_n(|x(\tau)|=i)+\sum_{\lfloor n/2 \rfloor+1}^{n-1} i\,\P_n(|x(\tau)|=i)\\
&=\sum_{m=1}^{\lfloor n/2 \rfloor} \P_n\left(m\le |x(\tau)|\le \lfloor n/2 \rfloor\right)\,+\,o(1),\label{eq:E2parts}
\end{align}
where we have bounded the second summation from above by 
$$n\,\P_n\left(|x(\tau)|\ge \lfloor n/2 \rfloor+1\right)=o(1),$$ 
using Lemma~\ref{lem:xgem}.

To prove that the limit exists, we will show that $\E_n(|x(\tau)|)$ behaves asymptotically similarly to an increasing sequence, and that it is bounded from above.
For $n_0$ with $1\leq n_0 < n$, consider the bijection 
$$\begin{array}{ccc}
\S_n & \longrightarrow & \S_{n_0}\times\S_{n-n_0}\times\displaystyle\binom{[n]}{n_0} \\
\tau & \mapsto & (\sigma\ ,\ \pi\ ,\ A) \end{array}$$ 
defined as follows.
Let $$\sigma=\subpermtau{1}{\lfloor n_0/2 \rfloor]\cup[n-\lceil n_0/2 \rceil+1,n}$$
be the permutation obtained by concatenating the first $\lfloor n_0/2 \rfloor$ and the last
$\lceil n_0/2 \rceil$ entries of $\tau$, let $A$ be the set of values of these entries before applying the reduction map $\red$, and let $$\pi=\subpermtau{\lfloor n_0/2 \rfloor+1}{n-\lceil n_0/2 \rceil}$$ be the reduction of the remaining entries.
If $|x(\sigma)|\le \lfloor n_0/2 \rfloor$, then clearly $|x(\tau)|\ge|x(\sigma)|$, since the bifix of $\sigma$ of length $|x(\sigma)|$ is also a bifix of $\tau$. This means that for any given $m$,
the bijection associates to every triple $(\sigma,\pi,A)$ with $m\le |x(\sigma)| \le \lfloor n_0/2 \rfloor$ a permutation $\tau$ with  $|x(\tau)|\ge m$. It follows that
$$|\{\sigma\in\S_{n_0}:m\le |x(\sigma)| \le \lfloor n_0/2 \rfloor\}|\,(n-n_0)!\,\binom{n}{n_0}\ \le\  |\{\tau\in\S_{n}: |x(\tau)| \ge m\}|.$$ Dividing both sides by $n!$, we get
$$
\P_{n_0}\left(m\le |x(\tau)|  \le  \lfloor n_0/2 \rfloor\right)  \le  \P_n(|x(\tau)| \ge m)
\ =\  \P_{n}\left(m\le |x(\tau)| \le \lfloor n/2 \rfloor\right)\,+\, o(n^{-1})
$$
by Lemma~\ref{lem:xgem}. Summing over $m$, we have
$$\sum_{m=1}^{\lfloor n_0/2 \rfloor}\P_{n_0}\left(m\le |x(\tau)| \le \lfloor n_0/2 \rfloor\right)\ \le\
\sum_{m=1}^{\lfloor n/2 \rfloor} \P_{n}\left(m\le |x(\tau)| \le \lfloor n/2 \rfloor\right) + o(1).$$
Using now Equation~\eqref{eq:E2parts}, it follows that 
\begin{equation}\label{eq:Eincreasing}
\E_{n_0}(|x(\tau)|)\le \E_n(|x(\tau)|)+o'(1),
\end{equation}
where here $o'(1)$ is an expression that tends to $0$ when both $n$ and $n_0$ go to infinity.

To show that the sequence $\E_n(|x(\tau)|)$ is bounded from above by a constant, we again use  Lemma~\ref{lem:xgem} to conclude that
\begin{multline*}
\E_n(|x(\tau)|)\ =\ \sum_{m=1}^{n-1} \P_n(|x(\tau)|\ge m)\ \le\ \sum_{m=1}^{\lfloor n/2 \rfloor} \sum_{i=m}^{\lfloor n/2 \rfloor} \frac{1}{i!} + o(1)\\
=\ \sum_{i=1}^{\lfloor n/2 \rfloor} \frac{i}{i!}+o(1)
\ \le\  \sum_{i=1}^{\infty} \frac{1}{(i-1)!}+o(1)\ =\ e+o(1).
\end{multline*}

The above inequality implies that $L\coloneqq\limsup_n \{\E_n(|x(\tau)|)\}$ exists, and that $L\le e$. Let us show that $L$ is in fact the limit of the sequence.  For any $\epsilon>0$, choose $n_0$ so that it satisfies the following two conditions.  First, take $n_0$ large enough so that for $n\ge n_0$, we have $\E_{n_0}(|x(\tau)|)\le \E_n(|x(\tau)|)+\epsilon/2$ (such an $n_0$ exists by Equation~\eqref{eq:Eincreasing}).  Secondly, pick $n_0$ so that $\E_{n_0}(|x(\tau)|)>L-\epsilon/2$ (such an $n_0$ exists by definition of $L$).
Then, for all $n\ge n_0$,
$$\E_n(|x(\tau)|) \ge \E_{n_0}(|x(\tau)|)-\frac{\epsilon}{2}>L-\epsilon.$$
This proves that $$\lim_{n\to\infty} \E_n(|x(\tau)|)=L.$$

To prove the lower bound, recall that $B_i$ is the event ``$\tau$ has a bifix of length $i$.''
Clearly $\P_n(|x(\tau)|\ge m)\ge \P_n(B_m)$, which equals $\frac{1}{m!}$ when $1\le m\le \lfloor n/2 \rfloor$.
Thus,
$$\E_n(|x(\tau)|)= \sum_{m=1}^{n-1}\P_n\left(|x(\tau)|\ge m\right) \ge \sum_{m=1}^{\lfloor n/2 \rfloor} \P_n(B_m)=\sum_{m=1}^{\lfloor n/2 \rfloor} \frac{1}{m!}.$$
The limit of the right-hand side as $n\to\infty$ is $e-1$.
\end{proof}

\begin{problem}
Find the exact value of $\lim_{n\to\infty} \E_n(|x(\tau)|)$.
\end{problem}

When $n=10$, the value of $\E_n(|x(\tau)|)$ is approximately 1.909. 
Some relatively crude computations (for each $n$ with $11 \leq n \leq 100$, we took a random sampling of 7! permutations) suggest that the value of the limit is between 1.9 and 1.92.

\begin{problem}
Find the limiting distribution of $|x(\tau)|$, that is, find $$\lim_{n\to\infty} \P_n(|x(\tau)|=k)$$ for all~$k$.
\end{problem}

It was shown by B\'ona~\cite{Bona11} that when $k=1$, the sequence $\P_n(|x(\tau)|=1)$ is decreasing and so its limit exists, and that
$0.3640981 \le \lim_{n\to\infty} \P_n(|x(\tau)|=1) \le 0.3640993$. Even though for fixed $k$ the sequences $\P_n(|x(\tau)|=k)$ are not monotone in general, it is possible to obtain recurrences similar to those in~\cite{Bona11} to show that the corresponding limits exist for all $k$ and are positive.

\subsection{Permutations with no carrier, and the M\"obius function of most intervals}\label{sub:carrier}

Recall that $\tau$ is said to have a carrier element if $x(\tau)\nleq i(\tau)$. Carrier elements, defined in~\cite{BFS11}, play a crucial role in determining the M\"obius function of intervals in $\bigP$, as we pointed out in Subsection~\ref{sub:mobiuspapers}.  A question that arises is how many permutations have a carrier element. In this subsection we study this question, and we use our findings to deduce that most intervals (in a precise sense that we will describe) have a zero M\"obius function. Since this subsection involves inequalities in both $\mathbb{R}$ and $\bigP$, we will use the symbol $\lep$ in the latter case as a distinguisher.

For $n$ from $2$ to $10$, the number of permutations $\tau\in\S_n$ with no carrier element,
that is, satisfying $x(\tau)\lep i(\tau)$, is given by the sequence 
\begin{equation}\label{seq:nocarrier}
0,4,12,84,548,4172,33984,315800,3213032,\dots\ .
\end{equation}

\begin{problem} Find a formula for the number of permutations with no carrier element.
\end{problem}

We point out that the displayed terms of the sequence~\eqref{seq:nocarrier} are all divisible by four, although we have not proved if this is the case for all terms.
Note that permutations with $n \ge3$ and no carrier element include non-overlapping permutations, since an exterior of length one is trivially contained in the interior, and that the number of such permutations is divisible by 4 by Lemma~\ref{lem:nonov4}.
From an asymptotic perspective, comparing the sequence~\eqref{seq:nocarrier} with $n!$ suggests that, as $n$ grows, most permutations in $\S_n$ have no carrier element. Next we show that this is indeed the case.

\begin{theorem}\label{thm:xi}
$$\lim_{n\to\infty} \P_{n} (x(\tau)\not\lep i(\tau)) =0.$$
\end{theorem}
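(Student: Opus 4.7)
The plan is a truncation argument on $|x(\tau)|$. The intuition combines two existing tools: Theorem~\ref{thm:expectedxlength} tells us that $|x(\tau)|$ is typically of bounded size, while Lemma~\ref{lem:sigletau} tells us that any fixed short permutation occurs inside a large random permutation with probability tending to $1$. Provided we can argue that the interior $i(\tau)$ is distributionally close to a uniform random element of $\S_{n-2}$, a union bound then combines these ingredients into the desired asymptotic.

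More precisely, fix $\eps>0$ and split
\[
\P_n(x(\tau)\not\lep i(\tau)) \ \le\  \P_n(|x(\tau)|>M) + \P_n\!\left(|x(\tau)|\le M,\ x(\tau)\not\lep i(\tau)\right).
\]
Lemma~\ref{lem:xgem} bounds the first term by $\sum_{i=M+1}^{\infty}\frac{1}{i!}+o(1)$, which can be made $<\eps$ by choosing $M$, and then $n$, large. For the second term, the crucial observation is that the map $\tau\mapsto\tau_{[2,n-1]}$ is an $n(n-1)$-to-one surjection $\S_n\to\S_{n-2}$: for each $\pi\in\S_{n-2}$, the preimages are indexed by the choice of the two outer values $\tau_1$ and $\tau_n$, after which the middle entries are forced by the requirement $\red(\tau_2\dots\tau_{n-1})=\pi$. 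Consequently, when $\tau$ is uniform on $\S_n$, the interior $i(\tau)$ is uniform on $\S_{n-2}$, and Lemma~\ref{lem:sigletau} yields $\P_n(\sigma\not\lep i(\tau))\to 0$ for every fixed $\sigma$.

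A direct union bound then gives
\[
\P_n\!\left(|x(\tau)|\le M,\ x(\tau)\not\lep i(\tau)\right)
\ \le\ \sum_{k=1}^M\sum_{\sigma\in\S_k}\P_n(\sigma\not\lep i(\tau)),
\]
a finite sum of quantities each tending to zero for $M$ fixed, hence itself tending to zero. Together with the tail bound above, this shows the left-hand side of the theorem is $<2\eps$ for $n$ sufficiently large, as required. No single step is particularly delicate; the main temptation to resist is conditioning on $x(\tau)=\sigma$, which would force us to control the conditional law of $i(\tau)$ given $x(\tau)$. Dropping that conditioning in the union bound above is wasteful but entirely sufficient, and sidesteps the dependence between $x(\tau)$ and $i(\tau)$.
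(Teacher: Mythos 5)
Your proof is correct but takes a genuinely different route from the paper. The paper's argument compares $x(\tau)$ not with $i(\tau)$ but with the smaller permutation $\xx(\tau)=\subpermtau{k+1}{n-k}$ (where $k=|x(\tau)|$) obtained by stripping both occurrences of $x(\tau)$; conditioning on $|x(\tau)|\le m$ reduces the problem to showing that $\subpermtau{1}{m}\lep\subpermtau{m+1}{n-m}$ almost surely, which is handled by the independence of these two patterns and a disjoint-block estimate, with the truncation parameter $m=m(n)=\lfloor\log\log n\rfloor$ chosen to grow slowly enough that both error terms vanish simultaneously. You instead observe that the map $\tau\mapsto i(\tau)$ is exactly $n(n-1)$-to-one, so $i(\tau)$ is uniform on $\S_{n-2}$, which lets Lemma~\ref{lem:sigletau} be invoked directly; the dependence between $x(\tau)$ and $i(\tau)$ is then circumvented by a finite union bound over all candidate exteriors of length at most a \emph{fixed} $M$. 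The cost is a factor of $\sum_{k\le M}k!$ in the number of summands, but for fixed $M$ that is a finite sum of quantities each tending to zero, and the truncation at $M$ is controlled by Lemma~\ref{lem:xgem} exactly as in the paper. Your version is arguably more elementary: it dispenses both with the auxiliary object $\xx(\tau)$ and with the delicate slowly-growing $m(n)$, trading them for the clean distributional fact about $i(\tau)$ and an unapologetically wasteful but adequate union bound. It is worth noting that both arguments must confront the same core obstacle, namely that $x(\tau)$ and $i(\tau)$ are highly dependent; the paper defuses it via conditioning and block independence, while you defuse it by refusing to condition at all.
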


\begin{proof} 
Let $n=|\tau|$ and $k=|x(\tau)|$. If $k<n/2$, let $\xx(\tau)=\subpermtau{k+1}{n-k}$ be the permutation obtained from $\tau$ by deleting the initial and final occurrences of $x(\tau)$. If $k\ge n/2$, define $\xx(\tau)$ to be the empty permutation (and let us add the empty permutation to $\bigP$ as its new bottom element for convenience).  Since $\xx(\tau)\lep i(\tau)$, we have
$$\P_n(x(\tau)\not\lep i(\tau)) \le \P_n(x(\tau)\not\lep \xx(\tau)).$$ For every $m$, conditioning on whether $|x(\tau)|\le m$ or not, we can write $ \P_n(x(\tau)\not\lep \xx(\tau))$ as a sum of two terms, the first being
\begin{align}
\nonumber &\P_n(|x(\tau)|\le m)\ \P_n\left(x(\tau)\not\lep\xx(\tau)\;\big|\; |x(\tau)|\le m\right)\\
\nonumber &\le\ \P_n(|x(\tau)|\le m)\ \P_n\left(\subpermtau{1}{m}\not\lep \subpermtau{m+1}{n-m}\; \big|\; |x(\tau)|\le m\right)\\
\label{eq:notinside} &\le\ \P_n \left(\subpermtau{1}{m}\not\lep \subpermtau{m+1}{n-m}\right), \end{align}
and the second being
\begin{align}
\nonumber & \P_n(|x(\tau)|> m)\ \P_n\left(x(\tau)\not\lep\xx(\tau)\;\big|\; |x(\tau)|> m\right)\ \\
\nonumber & \le\ \P_n(|x(\tau)|> m) \\
\nonumber & \le\  \sum_{i=m+1}^{\infty} \frac{1}{i!} + o(n^{-1}),
\end{align}
using Lemma~\ref{lem:xgem}.

To bound~\eqref{eq:notinside}, notice that $\subpermtau{1}{m}$ and $\subpermtau{m+1}{n-m}$ are independent random permutations, and
split $\subpermtau{m+1}{n-m}$ into $\lfloor\frac{n-2m}{m}\rfloor$ disjoint blocks of size $m$ (with possibly some leftover entries that do not belong to any of the blocks). The probability that each individual block is not an occurrence of $\subpermtau{1}{m}$ is $1-1/m!$. For 
$\subpermtau{1}{m}\not\lep \subpermtau{m+1}{n-m}$ to hold, none of the blocks can be an occurrence of 
$\subpermtau{1}{m}$. Since these are independent events, we have that 
$$\P_n \left(\subpermtau{1}{m}\not\lep \subpermtau{m+1}{n-m}\right)\le\left(1-\frac{1}{m!}\right)^{\lfloor\frac{n}{m}\rfloor-2}.$$

Combining all these bounds, we get
\begin{equation}\label{eq:bound0}\P_n(x(\tau)\not\lep i(\tau))\ \le\ \left(1-\frac{1}{m!}\right)^{\lfloor\frac{n}{m}\rfloor-2} + \sum_{i=m+1}^{\infty} \frac{1}{i!} + o(n^{-1})\end{equation}
for every $m$.

Finally, it is enough to choose $m$ to be a slowly-growing function of $n$ with $m(n)\to\infty$ as $n\to\infty$, such as $m=m(n)=\lfloor \log\log n \rfloor$. For such choice of $m$, writing
$$\left(1-\frac{1}{m!}\right)^{\frac{n}{m}}=\left[\left(1-\frac{1}{m!}\right)^{m!}\right]^{\frac{n}{m m!}},$$ 
the term inside the square brackets approaches $e^{-1}$ as $n\to\infty$. Using Stirling's formula, we see that
\begin{align}
(\log\log n)(\log\log n)! & \ll (\log\log n)^2\frac{(\log\log n)^{\log\log n}}{e^{\log\log n}}\\
\nonumber & \ll
e^{(\log\log\log n)(\log\log n)}  \ll e^{\log n}=n,
\end{align}
and so the exponent $\frac{n}{mm!}$ goes to infinity as $n$ grows. It follows that 
$$\left(1-\frac{1}{m!}\right)^{\lfloor\frac{n}{m}\rfloor-2}\le \left(1-\frac{1}{m!}\right)^{\frac{n}{m}-3}\longrightarrow0$$
as $n \to \infty$.
Clearly, the summation of $1/i!$ in \eqref{eq:bound0} tends to 0 as $m\to\infty$. Thus, the whole right-hand side of \eqref{eq:bound0} tends to 0 as $n\to\infty$, proving the statement.
\end{proof}

A consequence of Theorem~\ref{thm:xi} is that for any fixed $\sigma$, most intervals of the form $[\sigma,\tau]$ will have zero M\"obius function.

\begin{corollary}\label{cor:mu0}
For fixed $\sigma$, let $\P_n^\sigma (\mu(\sigma,\tau)=0)$ denote the probability that $\mu(\sigma,\tau)\linebreak=0$ when $\tau$ is chosen uniformly at random among permutations in $\S_n$ that contain~$\sigma$. Then, for every $\sigma$,
$$\lim_{n\to\infty} \P_n^\sigma (\mu(\sigma,\tau)=0) =1.$$
\end{corollary}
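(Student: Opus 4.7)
The plan is to deduce this corollary directly from Theorem~\ref{thm:mobius} together with Theorem~\ref{thm:xi} and Lemma~\ref{lem:sigletau}; all of the substantive asymptotic work has already been done, so this is really a bookkeeping argument.

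First I would fix $\sigma$ and take $n$ large enough that $n - |\sigma| > 2$, which holds for all sufficiently large $n$ and is all we ever need. Under this hypothesis, inspection of Theorem~\ref{thm:mobius} shows that $\mu(\sigma,\tau) \neq 0$ forces us into the first case of the piecewise formula; in particular, $\mu(\sigma,\tau) \neq 0$ implies $\sigma \le x(\tau) \not\le i(\tau)$, and hence in particular $x(\tau) \not\lep i(\tau)$, i.e., $\tau$ has a carrier element. (We do not need the recursive content of the first case; the single fact that the carrier condition is necessary is enough.) Therefore, for all $n$ with $n - |\sigma| > 2$,
\[
\{\tau \in \S_n : \sigma \le \tau \text{ and } \mu(\sigma,\tau) \neq 0\} \subseteq \{\tau \in \S_n : x(\tau) \not\lep i(\tau)\}.
\]

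Next I would turn this inclusion into the desired conditional probability statement. Unpacking the definition,
\[
\P_n^\sigma(\mu(\sigma,\tau) \neq 0) \;=\; \frac{\P_n(\sigma \le \tau \text{ and } \mu(\sigma,\tau) \neq 0)}{\P_n(\sigma \le \tau)}.
\]
The numerator is bounded above by $\P_n(x(\tau) \not\lep i(\tau))$, which tends to $0$ by Theorem~\ref{thm:xi}. The denominator tends to $1$ by Lemma~\ref{lem:sigletau}. Therefore $\P_n^\sigma(\mu(\sigma,\tau) \neq 0) \to 0$, and passing to the complement gives $\P_n^\sigma(\mu(\sigma,\tau) = 0) \to 1$, as claimed.

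There is no real obstacle here; the only thing to take care of is the conditioning on $\sigma \le \tau$, which is why Lemma~\ref{lem:sigletau} is invoked. All the genuine difficulty lies upstream in Theorem~\ref{thm:xi}, whose proof already controls the asymptotics of permutations with a carrier element by carefully bounding $\P_n(x(\tau) \not\lep \xx(\tau))$ using Lemma~\ref{lem:xgem}.
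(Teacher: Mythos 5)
Your proof is correct and matches the paper's argument essentially line by line: both use Theorem~\ref{thm:mobius} to observe that (for $n - |\sigma| > 2$) nonvanishing of $\mu(\sigma,\tau)$ forces the carrier condition $x(\tau)\not\lep i(\tau)$, then invoke Theorem~\ref{thm:xi} to drive that probability to zero and Lemma~\ref{lem:sigletau} to pass from unconditional to conditional probabilities. Your explicit unpacking of $\P_n^\sigma$ as a quotient of $\P_n$-probabilities is exactly the inequality the paper writes.
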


\begin{proof}
When $|\tau|-|\sigma|>2$, Theorem~\ref{thm:mobius} states that $\mu(\sigma,\tau)=0$ unless $\sigma\lep x(\tau)\not\lep i(\tau)$.
By Theorem~\ref{thm:xi}, the probability that $x(\tau)\not\lep i(\tau)$ when $\tau$ is chosen uniformly at random from $\S_n$ tends to 0 as $n\to\infty$. It follows that, for any given $\sigma$,
$$\lim_{n\to\infty} \P_n(\mu(\sigma,\tau)=0) =1.$$

To restrict to permutations $\tau$ containing $\sigma$, note that
$$\P_n^\sigma (\mu(\sigma,\tau)\neq0) = \P_n\left(\mu(\sigma,\tau)\neq0 \;\big|\; \sigma\lep\tau\right)
\le \frac{ \P_n(\mu(\sigma,\tau)\neq0)}{\P_n(\sigma\lep\tau)}.$$  By Lemma~\ref{lem:sigletau}, $\lim_{n\to\infty}\P_n(\sigma\lep\tau)=1$, so $\lim_{n\to\infty}\P_n^\sigma (\mu(\sigma,\tau)\neq0)=0$ as claimed.
\end{proof}

We point out that Bernini et al.~\cite{BFS11} give several conditions on $\sigma$ and $\tau$ that imply $\mu(\sigma,\tau)=0$. For example, they show that this happens when the first two entries of $\tau$ are not involved in any occurrence of $\sigma$. While the results in~\cite{BFS11} show that the probability that $\mu(\sigma,\tau)=0$ is large, their conditions depend on $\sigma$, and thus they cannot be used to prove that for fixed $\sigma$ this probability gets arbitrarily close to one as the length of $\tau$ increases, as we do in Corollary~\ref{cor:mu0}.

\section*{Acknowledgement} 
We thank anonymous referees for suggestions that improved the exposition.

\bibliographystyle{alpha}
\bibliography{consec_pattern_poset}

\end{document}